\numberwithin{equation}{section}
\newtheorem{thm}{Theorem}[section]
\newtheorem{lemma}[thm]{Lemma}
\newtheorem{prop}[thm]{Proposition}
\theoremstyle{definition}
\newcommand{\defeq}{\stackrel{\rm{def}}{=}}
\renewcommand{\Re}{\operatorname{\rm Re}\nolimits}
\renewcommand{\Im}{\operatorname{\rm Im}\nolimits}
\def \comp {\operatorname{comp}}
\def \tr {\operatorname{tr}}
\def \vhalf {V^{1/2}}
\def \isig {i\sigma}
\def \restrict {\!\!\upharpoonright}
\def \Real {{\mathbb R}}
\def \Sphere {\mathbb{S}}
\def \Complex {\mathbb{C}}
\def \Natural {{\mathbb N}}
\def \Sphere {{\mathbb S}}
\def \Integers {{\mathbb Z}}
\title [Lower bounds for resonance counting functions]
{Lower bounds for resonance counting functions for 
Schr\"odinger operators with fixed sign potentials in even dimensions
}
   \author { T.J. Christiansen}
\address{Department of Mathematics,
University of Missouri,
Columbia, Missouri 65211, USA} 
\email{christiansent@missouri.edu}
\begin{document}

\begin{abstract} If $d$ is even,
the resonances of the Schr\"odinger operator $-\Delta +V$
 on $\Real^d$ with $V$ bounded and
compactly  supported  are points on $\Lambda$, the 
logarithmic cover of $\Complex \setminus \{0\}$.  We show that for
fixed sign potentials $V$ and for $m\in \Integers \setminus \{0\}$,  
 the resonance
counting function for the $m$th sheet of $\Lambda$ has maximal order
of growth.
\end{abstract}

\maketitle

\section{Introduction}

The purpose of this paper is to prove some optimal lower bounds on the 
growth rate of resonance-counting functions for certain Schr\"odinger operators
in even-dimensional Euclidean space.
The resonances associated to the Schr\"odinger operator
$-\Delta +V$, with potential $V\in L^{\infty}_{\comp}(\Real^d)$,
 lie on $\Lambda$, the 
logarithmic cover of $\Complex \setminus \{ 0 \}$, if $d$ is even.
The main result of this paper is that for  scattering by a fixed sign, 
compactly supported  
potential $V$ the resonance counting function for the 
$m$th sheet of $\Lambda$
 has maximal order of growth for any $m\in \Integers \setminus \{ 0\}$.  
Though the results of \cite{ch-hi2} show that there are many potentials
with resonance counting functions for the $m$th sheet having maximal order
of growth, the technique of \cite{ch-hi2} does not give a way of 
identifying them other than those which are scalar multiples of
the characteristic function of a ball.  In comparison, in odd dimensions
$d\geq 3$ the only specific real-valued potentials 
$V\in L^\infty_{\comp}(\Real^d)$
which are known to have resonance-counting function with optimal 
order of growth  are 
certain radial potentials \cite{zwradpot}, though in that case
asymptotics are known (see \cite{zwradpot} and \cite{dinh-vu}).

Let $V\in L^\infty_{\comp}(\Real^d)$ and let $\Delta\leq 0$ denote
the Laplacian on $\Real^d$.  We define the resolvent
$R_V(\lambda)=(-\Delta +V-\lambda^2)^{-1}$ for $\lambda$ in the 
``physical space'', $0<\arg \lambda <\pi$.  With at most a finite number
of exceptional values of $\lambda$, 
$R_V(\lambda)$ is bounded on $L^2(\Real^d)$ for $\lambda$ in this region. It 
is well known that for 
$\chi \in L^{\infty}_{\comp}(\Real^d)$, $\chi R_V(\lambda)\chi$ 
has a meromorphic continuation to $\Complex$ when $d$ is odd  and
to $\Lambda$, the logarithmic cover of $\Complex \setminus \{ 0\}$, 
when $d$ is even (e.g. \cite[Chapter 2]{lrb}).
In either case, the resonances are defined to be the poles of 
$\chi R_V(\lambda)\chi$ when $\chi$ is chosen to satisfy $\chi V\equiv V$.
The fact that when $d$ is even the resonances lie on $\Lambda$ makes 
them generally more difficult to study in the even-dimensional case
than in the odd-dimensional case.  

A point on $\Lambda$ can be described
by its modulus and argument, where we do not identify points which
have arguments differing by nonzero integral multiples of $2\pi$.
Thus the physical half plane corresponds to $\Lambda_0
\defeq \{ \lambda \in \Lambda:
0<\arg \lambda <\pi\}$.  Likewise, for $m\in \Integers$
 we may define the $m$th sheet to be
$$\Lambda_m \defeq \{ \lambda \in \Lambda: m\pi <\arg \lambda<(m+1)\pi\}$$ 
which is homeomorphic with the physical region and
can be identified
with the upper half plane when convenient.

Vodev \cite{vodeveven, vodev2}, following 
earlier work of Intissar \cite{intissar} studied the resonance counting function
$n_V(r,a)$, defined to be the number of resonances (counted with multiplicity,
here and everywhere)
with norm at most $r$ and argument between $-a$ and $a$.  He showed that
there is a constant $C$ which depends on $V$ but not on $r$ or $a$ so that
$$n_V(r,a) \leq C a( r^d + (\log a)^d)),\; \text{for}\; r,\; a>1.$$

 The most general
lower bound known
is due to S\'aBarreto (\cite{SaB1}, $d\geq 4$) 
and Chen (\cite{chen}, d=2): 
$$\lim \sup_{r \rightarrow \infty} \frac{\# \{ \lambda_j:\; \text{pole of $R_V(\lambda)$}\; \text{with} \;\frac{1}{r}\leq |\lambda_j|\leq r,\; |\arg \lambda_j| \leq \log r \}}{(\log r)( \log \log r)^{-p}}
=\infty \; \forall p>1$$
for any nontrivial $V\in C_c^{\infty}(\Real^d;\Real)$.
This follows the earlier work of \cite{s-t}.
  We note
that the assumption that the potential is real-valued is crucial here.  There
are explicit examples of nontrivial complex-valued potentials
$V\in L^{\infty}_{\comp}(\Real^d)$ which can be chosen
to be smooth so that the corresponding Schr\"odinger
operator $-\Delta +V$ has neither eigenvalues nor resonances \cite{autin, 
chex, iso}.

For $m\in \Integers$, let $n_m(r)=n_{m,V}(r)$ be the number of 
resonances of $-\Delta + V$ which both lie on $\Lambda_m$ and have norm 
at most $r$.  We call this the resonance counting function for 
the $m$th sheet.  It follows from Vodev's result that $n_m(r)= O(r^d)$
as $r\rightarrow \infty$.  On the other hand, lower bounds have proved 
more elusive.   The results of \cite[Theorem 1.1]{ch-hi2} show that
``generically'' for  potentials $V\in L^{\infty}_{\comp}(\Real^d)$, 
$m\in \Integers \setminus \{ 0\}$, 
\begin{equation}
\label{eq:limsup}
\lim \sup _{r\rightarrow \infty}\frac{\log n_{m,V}(r)}{\log r}=d.
\end{equation}
However, the result of \cite{ch-hi2} is nonconstructive 
in the sense that other than  potentials which are nonzero
positive scalar multiples of  the
characteristic function of a ball
and those complex-valued potentials
which are isoresonant with them \cite{iso}, 
that paper does not give a way of identifying
the particular potentials for which (\ref{eq:limsup}) holds.

The main result of this paper is the following theorem.
\begin{thm}\label{thm:lbd} Let $d$ be even.  Suppose
 $V\in L^{\infty}_{\comp}(\Real^d)$ with $V$ bounded below
by $\epsilon \chi_B$, where $\epsilon>0$ and $\chi_B$ is the characteristic
function of a nontrivial ball $B$.  Then for any nonzero $m\in \Integers$,
$$\lim \sup _{r \rightarrow \infty} \frac{\log n_{m, \pm V}(r)}{\log r} =d.$$
\end{thm}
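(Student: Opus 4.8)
\emph{Outline of the proof.}
By the bound of Vodev quoted above, $n_{m,V}(r)=O(r^{d})$, so $\limsup_{r\to\infty}\log n_{m,\pm V}(r)/\log r\le d$ for every $m$; only the reverse inequality needs proof. Fix an integer $p>d/2$. Since $V\ge\epsilon\chi_B$ we have $V\ge0$, so $V^{1/2}$ is well defined, and for $\lambda\in\Lambda_m$ we set
\[
D_m^{\pm}(\lambda)=\det\nolimits_p\!\bigl(I\pm V^{1/2}R_0(\lambda)V^{1/2}\bigr),
\]
a function holomorphic on the half plane $\Lambda_m$, whose zeros, counted with multiplicity, are exactly the resonances of $-\Delta\pm V$ on $\Lambda_m$. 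Thus $n_{m,\pm V}(r)$ is the number of zeros of $D_m^{\pm}$ of modulus $\le r$, and it suffices to show that $D_m^{\pm}$ has at least $r^{d-o(1)}$ zeros of modulus $\le r$ along some sequence $r\to\infty$.

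The zeros arise from the difference between sheets. For $d$ even the free resolvent admits, as a family of cutoff operators holomorphic on $\Complex$, a decomposition $R_0(\lambda)=G_1(\lambda)+(\log\lambda)\,G_2(\lambda)$ in which $G_1$ depends on $\lambda$ only through $\lambda^{2}$ and $G_2(\lambda)=c_d\,\lambda^{d-2}\,\mathcal E(\lambda)$, where $\mathcal E(\lambda)$ is the entire operator with kernel $\mathcal E(\lambda)(x,y)=\int_{\mathbb{S}^{d-1}}e^{i\lambda\langle x-y,\omega\rangle}\,d\omega$; for $\lambda>0$ the free spectral measure is a positive multiple of $\lambda^{d-2}\mathcal E(\lambda)$, and $\mathcal E(\lambda)\ge0$. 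Hence passing from $\Lambda_0$ to $\Lambda_m$ adds $i\pi m$ to the factor $\log\lambda$; in particular, on the ray $\{e^{im\pi}t:t>0\}$ bounding $\Lambda_m$,
\[
V^{1/2}R_0(\lambda)V^{1/2}=A_m(t)+i\,\beta_m(t),\qquad A_m(t)=A_m(t)^{*},\qquad \beta_m(t)=\kappa_m\,t^{d-2}\,V^{1/2}\mathcal E(t)V^{1/2},
\]
with $\sgn\kappa_m=\sgn(\tfrac12-m)$ and $|\kappa_m|\ge\tfrac12$ for $m\ne0$, while $A_m(t)$ is dominated by a term proportional to $(\log t)\,t^{d-2}V^{1/2}\mathcal E(t)V^{1/2}$. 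The fixed sign of $V$ enters twice: it makes $V^{1/2}$ available, so that $D_m^{\pm}$ is the determinant of a symmetric operator with this controlled self-adjoint-plus-skew decomposition; and it yields the operator inequality $V^{1/2}\mathcal E(\lambda)V^{1/2}\ge\epsilon\,\chi_B\mathcal E(\lambda)\chi_B\ge0$ (on the bounding ray), which guarantees that the ``large'' part of $\lambda^{d-2}\mathcal E(\lambda)$ (which has of order $(|\lambda|\diam B)^{d-1}$ singular values, of size growing like a power of $|\lambda|$ times $e^{c|\Im\lambda|}$ once $\lambda$ is pushed into the interior of $\Lambda_m$) survives multiplication by $V^{1/2}$ on both sides and is bounded below in terms of the ball $B$ alone.

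To produce the zeros one applies the argument principle to $D_m^{\pm}$ on a contour bounding $\{\lambda\in\Lambda_m:|\lambda|<r\}$, equivalently counting how many eigenvalues of $V^{1/2}R_0(\lambda)V^{1/2}$ cross $\mp1$ as $\lambda$ ranges over this half-disc. The sign-definiteness of $\beta_m(t)$ confines the spectrum of the boundary-ray operator $A_m(t)+i\beta_m(t)$ to a closed half-plane, so the crossings occur in a single direction and do not cancel; together with the abundance ($\gtrsim|\lambda|^{d-1}$) of large eigenvalues of $\lambda^{d-2}V^{1/2}\mathcal E(\lambda)V^{1/2}$ in the interior, this forces $\gtrsim r^{d}/\log r$ crossings, hence that many resonances of modulus $\le r$. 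The determinant formalism, the decomposition of $R_0$ on $\Lambda$, and the already-established case $V=\epsilon\chi_B$ follow \cite{ch-hi2}; what is new is the use of the sign of $V$ to make the count apply to the specific operators $-\Delta\pm V$, bypassing the pluripotential-theoretic ``generic $V$'' argument of \cite{ch-hi2}.

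The main obstacle is the quantitative bookkeeping of this last step. The function $D_m^{\pm}$ grows faster than $e^{C|\lambda|^{d}}$ as $\lambda$ approaches the boundary of the half plane $\Lambda_m$, and this growth, which is present for every potential, does not reflect zeros, so the order of the zero-counting function cannot simply be read off the growth of $D_m^{\pm}$. One must show directly that enough eigenvalues of $V^{1/2}R_0(\lambda)V^{1/2}$ sweep past $\mp1$: this requires controlling a genuinely non-self-adjoint eigenvalue-perturbation problem and ruling out that cancellations in the winding (contributions from the curved part of the contour, or eigenvalues that approach and then retreat from $\mp1$) destroy the lower bound. The sign condition on $\beta_m$ and the operator lower bound coming from $V\ge\epsilon\chi_B$ are the structural inputs that make this control possible.
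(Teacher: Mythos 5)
There is a genuine gap at the heart of your argument, and you have in effect named it yourself in your final paragraph. Your plan is to count resonances directly by the argument principle, i.e. by counting how many eigenvalues of $V^{1/2}R_0(\lambda)V^{1/2}$ cross $\mp 1$ as $\lambda$ sweeps a half-disc in $\Lambda_m$, using the sign-definiteness of the imaginary part on the bounding ray to prevent cancellation. But the step that would make this work --- showing that $\gtrsim r^{d}/\log r$ eigenvalues actually do cross, and that contributions from the curved part of the contour and from eigenvalues that approach and retreat from $\mp1$ do not destroy the count --- is exactly the content of the theorem, and no mechanism is supplied for it. Confining the spectrum of $A_m(t)+i\beta_m(t)$ to a half-plane on the ray does not by itself control the winding of the determinant over the two-dimensional parameter region, and the non-self-adjoint perturbation problem you flag is not one that the fixed sign of $V$ resolves in any direct way. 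As written, the proposal is a program, not a proof.

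It is worth seeing how the paper sidesteps precisely this difficulty. No winding count is ever performed. Instead, one proves a pointwise lower bound $|F_{m,\pm V}(i\sigma)|\geq c_0\exp(c_0\sigma^{d})$ on the positive imaginary axis (Proposition \ref{p:lb}), where the relevant operator $B_{1,\pm,V}(i\sigma)$ is self-adjoint; the fixed sign of $V$ enters through a monotonicity lemma ($0\le V_1\le V_2$ implies $|F_{m,\pm V_1}(i\sigma)|\le|F_{m,\pm V_2}(i\sigma)|$, Lemma \ref{l:mono}, resting on \cite[Lemma 2.2]{vasy}), which reduces everything to the explicit case $V_0=\epsilon\chi_a$, handled by spherical harmonics and the exponential growth of $I_{\nu}(\nu s)$ for $s>1$. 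The theorem then follows by contradiction from one-variable complex analysis: if $n_{m,\pm V}(r)=O(r^{d'})$ with $d'<d$, Proposition \ref{p:complex} (dividing out a canonical product built from the zeros and their reflections, plus a Phragm\'en--Lindel\"of argument) forces $|F_{m,\pm V}(z)|\leq C\exp(C|z|^{\rho})$ with $\rho<d$ throughout the closed half-plane, contradicting the lower bound on the imaginary axis. The only boundary information required is the bound $|F'_{m,\pm V}(t)/F_{m,\pm V}(t)|\leq C|t|^{d-2}$ on the real rays (Lemma \ref{l:bdonreal}), a far softer input than the global winding control your approach would need. If you want to salvage your route, you would have to supply a genuinely new eigenvalue-tracking argument; otherwise the contradiction scheme via growth on the imaginary axis is the workable path.
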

We  note that by Vodev's result $d$ is the maximum value this limit can obtain.
When this limit is $d$, we say that the $m$th counting function has maximal 
order of growth.

Theorem \ref{thm:lbd}, 
when combined with  \cite[Theorem 3.8]{ch-hi2}, 
has the following theorem 
as an 
immediate corollary.
\begin{thm}
Let $d$ be even, and $K\subset \Real^d$ be a compact set with nonempty interior.
Let $F$ denote either $\Real $ or $\Complex$.  Then for $m\in \Integers$, $m\not
= 0$,
the set
$$\{ V\in C^{\infty}(K;F): \lim\sup_{r\rightarrow \infty } \frac{ \log n_{m,V}(r)}{\log r}=d\}$$
is dense in $C^{\infty}(K;F)$.
\end{thm}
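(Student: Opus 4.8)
\noindent\emph{Proof proposal.}\ The plan is to deduce this directly from Theorem~\ref{thm:lbd} and \cite[Theorem~3.8]{ch-hi2} — the statement is flagged in the paper as an immediate corollary — so the only real task is to produce one potential in the set and to check that the hypotheses of those two results are met. I take \cite[Theorem~3.8]{ch-hi2} to have the following shape (for $d$ even, $K$ compact with nonempty interior, $F=\Real$ or $\Complex$, and a fixed $m\in\Integers\setminus\{0\}$): with
$$\mco_m\defeq\Big\{V\in C^\infty(K;F):\ \lim\sup_{r\to\infty}\frac{\log n_{m,V}(r)}{\log r}=d\Big\},$$
either $\mco_m=\emptyset$ or $\mco_m$ is dense in $C^\infty(K;F)$; more precisely, the complement of $\mco_m$ is pluripolar when intersected with any finite-dimensional complex-linear subspace of $C^\infty(K;\Complex)$, which gives density in the complex case and, since a pluripolar subset of $\Complex^N$ meets $\Real^N$ in a Lebesgue-null set, in the real case as well. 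Hence it suffices to exhibit one member of $\mco_m$.

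To build the witness I would use the nonempty interior of $K$: fix a ball $B$ with $\overline B\subset\operatorname{int}K$ and take $V_0\in C^\infty(K;\Real)$, supported in $K$, with $V_0\ge 0$ everywhere and $V_0\equiv\epsilon$ on $B$ for some $\epsilon>0$. Regarded as an element of $L^\infty_{\comp}(\Real^d)$ it satisfies $V_0\ge\epsilon\chi_B$, so Theorem~\ref{thm:lbd} applies and yields $\lim\sup_{r\to\infty}\log n_{m,V_0}(r)/\log r=d$ for every nonzero $m$; in particular $V_0\in\mco_m$ for the $m$ at hand, and since $V_0$ is real-valued this holds whether $F=\Real$ or $F=\Complex$. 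Feeding this into the dichotomy above shows $\mco_m$ is dense in $C^\infty(K;F)$, as claimed.

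Because the result is advertised as an immediate corollary I do not anticipate a genuine obstacle; the points to watch are the precise formulation of \cite[Theorem~3.8]{ch-hi2} — namely that a single witness suffices and that its exceptional set is small enough to give density in the real category, not merely the complex one — and the remark that the nonempty interior of $K$ is exactly what lets a smooth $V_0$ supported in $K$ satisfy $V_0\ge\epsilon\chi_B$. All the substance sits in Theorem~\ref{thm:lbd} itself and, further upstream, in the several complex variables machinery behind \cite[Theorem~3.8]{ch-hi2}, both of which we are entitled to invoke here.
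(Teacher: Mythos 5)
Your proposal is correct and follows exactly the route the paper intends: the paper gives no separate proof, stating only that the theorem is an immediate corollary of Theorem \ref{thm:lbd} combined with \cite[Theorem 3.8]{ch-hi2}, and your construction of a smooth witness $V_0\ge\epsilon\chi_B$ supported in $K$ (using the nonempty interior) is precisely the missing ingredient that makes the corollary go through.
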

For the case of {\em odd} dimension $d\geq 3$, the analog of this theorem
was proved in \cite{scv}.  A stronger result holds in dimension 
$d=1$, see \cite{zworski1d} or \cite{froese,regge,simon}.

Theorem \ref{thm:lbd} may be compared with other results for 
fixed-sign potentials.
In the odd-dimensional case Lax-Phillips \cite{l-p} and Vasy \cite{vasy}
proved lower bounds on the number of pure imaginary resonances for potentials
of fixed sign.  In \cite{ch-hi4} it is shown that in even dimensions there
are no ``pure imaginary'' resonances for positive potentials, and on each
sheet $\Lambda_m$ of $\Lambda$, only finitely many for negative 
potentials.  Both
\cite{l-p} and \cite{vasy} use a monotonicity property for potentials of 
fixed sign.  This paper also uses a monotonicity property, though it is 
more closely related to one used in \cite{scv}.  Also important here are 
some results from one-dimensional 
complex analysis, more delicate than the corresponding complex-analytic
arguments from
\cite{scv}.

{\bf Acknowledgments.}  The author 
gratefully acknowledges the partial support of the NSF under grant 
DMS 1001156.

\section{Some Complex Analysis}

The main result of this section is
 Proposition \ref{p:complex}, which, roughly speaking, 
controls the growth of a function $f$ 
analytic in a half plane in terms of the growth of 
the counting function for the 
 zeros of $f$ in the half plane and the behavior of 
$f$ on 
the boundary of the half plane.

Both the statement and the proof of the following lemma
 bear some resemblance to those for
 Carath\'eodory's inequality for the disk. 
 The estimate we obtain here is likely a 
crude one, but suffices for our purposes.
\begin{lemma}\label{l:caratheodorytype}
Let $f$ be analytic in a neighborhood of 
$$\Omega_R\defeq \{ z\in \Complex: 
1\leq |z| \leq 
R,\; \Im z\geq 0\},$$
 $\rho>0$,  and for $x\in \Real \cap \Omega_R$, $|f(x)|\leq C_0|x|^{\rho}$
for some constant $C_0>0$.  Set 
$$M=\max_{|z|=1, \; z\in \Omega_R}|f(z)|$$
and define
$$A(R)= \max\left(   C_0 R^\rho,
 M R^\rho , \max_{ z\in \Omega_R }\Re f(z) \right).$$
Then
if $1<r<R$  and $z\in \Omega_R$ 
with $|z|=r$, then $|f(z)|\leq \frac{2r^{\rho}}{R^\rho- r^\rho} A(R).$
\end{lemma}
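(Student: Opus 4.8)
The plan is to reduce to a Schwarz–Pick/Carathéodory-type argument by mapping $\Omega_R$ conformally — or rather, by working with an explicit harmonic majorant on the annular half-disk. The quantity $A(R)$ is designed precisely so that $\Re f(z) \le A(R)$ for all $z \in \Omega_R$: this is immediate from the last term in the max. So the task is to bootstrap a bound on $\Re f$ on all of $\Omega_R$, together with size bounds $|f| \le C_0 R^\rho$ on the real segment and $|f| \le M R^\rho$ (in fact just $|f|\le M$) on the inner arc $|z|=1$, into a bound on $|f(z)|$ itself on the circle $|z| = r$. The standard mechanism for turning a one-sided bound on $\Re f$ into a two-sided bound on $|f|$ is the Borel–Carathéodory inequality, and the annular geometry plus the $\frac{2r^\rho}{R^\rho - r^\rho}$ shape of the conclusion strongly suggests we should be comparing against the function $z \mapsto z^\rho$ (suitably interpreted on $\Omega_R$, where $\arg z \in [0,\pi]$ so $z^\rho$ is single-valued and analytic).

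The concrete steps I would carry out: (1) Consider $g(z) = f(z)/z^\rho$ or, better, work directly with $f$ and the auxiliary conformal picture. The cleanest route is to double across the real axis: the segment $\{|x|\le R,\ \Im x = 0\}$ is part of the boundary of $\Omega_R$, and on it we have the \emph{size} bound $|f(x)| \le C_0|x|^\rho \le A(R)$, not merely a bound on $\Re f$. So I would use the reflection to extend the region, but since $f$ need not be real on the axis, instead I would directly estimate using the Poisson kernel for the half-annulus $\Omega_R$. (2) Write $u = \Re f$, which is harmonic on $\Omega_R$ and satisfies $u \le A(R)$ throughout. The Herglotz/Borel–Carathéodory trick: for $z_0$ with $|z_0| = r$, apply the Schwarz lemma to $\frac{f(z) - f(z_0)}{2A(R) - f(z) - \overline{f(z_0)}}$-type quotient after mapping $\Omega_R$ to a disk — but the half-annulus does not map to a disk by an elementary formula, so here is where care is needed. (3) The practical substitute, and I suspect what the paper does: cover the point $z_0$ with $|z_0|=r$ by an honest \emph{disk} $D(z_0, \delta)$ on which we have good control, where $\delta$ is chosen comparable to $R - r$ or to $\dist(z_0, \partial\Omega_R)$, and apply the ordinary Borel–Carathéodory inequality on that disk, using $\Re f \le A(R)$ on the larger disk and a lower bound on $|f|$ or on $\Re f$ at the center coming from... — actually one needs a value bound at some reference point. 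Given that $|f| \le M$ on $|z|=1$ and $M \le A(R)/R^\rho \le A(R)$, we can anchor at a point of the inner arc.

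Let me reorganize to the cleanest version. I would (a) fix $z_0 \in \Omega_R$ with $|z_0| = r$; (b) form the function $h(z) = \frac{f(z)}{z^\rho}$, analytic on $\Omega_R$ since $\arg z$ stays in $[0,\pi]$; on the inner arc $|h| \le M$, on the real segment $|h(x)| = |f(x)|/|x|^\rho \le C_0$, and $\Re(z^\rho h(z)) = \Re f(z) \le A(R)$; (c) the bound to prove is $|h(z_0)| \le \frac{2A(R)}{R^\rho - r^\rho}$. The key analytic input is Borel–Carathéodory applied on nested disks: for the disk $|z| < R$ (or a subdisk meeting $\Omega_R$), if $f(0)$ made sense one would get $|f(z)| \le \frac{2r}{R-r}\sup_{|w|=R}\Re f(w) + \frac{R+r}{R-r}|f(0)|$; the factor $\frac{2r^\rho}{R^\rho - r^\rho}$ is exactly this with $r,R$ replaced by $r^\rho, R^\rho$, i.e. after the change of variable $w = z^\rho$ mapping the half-disk-like region to a half-disk of radius $R^\rho$. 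So the real plan is: \emph{change variables $w = z^\rho$}, transforming $\Omega_R$ into a region containing $\{1 \le |w| \le R^\rho,\ \Im w \ge 0\}$ modified — actually $z \mapsto z^\rho$ sends the half-disk to a sector, not a half-disk, unless $\rho$ is chosen to flatten it; this is the obstacle.

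\textbf{The main obstacle} will be handling the geometry: $\Omega_R$ is a half-annulus, not a disk, so neither the classical Borel–Carathéodory inequality nor the Schwarz lemma applies directly, and $z\mapsto z^\rho$ does not conveniently rectify it for general $\rho$. I expect the resolution is a two-step comparison: first use the maximum principle / harmonic majorization on $\Omega_R$ with the explicit harmonic function built from $\arg$ and $\log|z|$ (the natural coordinates on the half-annulus) to control $\Re f$ and then, to pass from $\Re f$ to $|f|$, apply Borel–Carathéodory on a \emph{disk} $D$ centered near $z_0$ and contained in $\{|z| < R\} \cap \{\Im z > -\eta\}$ after a harmless extension, accepting a crude constant. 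The bound claimed — with its clean factor $2r^\rho/(R^\rho - r^\rho)$ and the admission that "the estimate is likely a crude one" — tells me the argument tolerates considerable slack, so I would not optimize: pick the disk of radius $R^\rho - r^\rho$ (in the $w=z^\rho$ variable) around $w_0 = z_0^\rho$, note it sits inside the disk of radius $R^\rho$ where $\Re(f) \le A(R)$, anchor the lower-order term using the bound on the inner arc $|w|=1$ where $|f| \le M \le A(R) R^{-\rho} \le A(R)$, and read off $|f(z_0)| = |f(w_0)| \le \frac{2(R^\rho - r^\rho) \cdot \text{stuff}}{\cdots}$ — then absorb everything into the stated form. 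I would double-check the edge cases $r$ near $1$ and $r$ near $R$ to confirm the constant $2$ genuinely suffices.
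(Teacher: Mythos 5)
There is a genuine gap: you never actually complete a proof. The proposal is a survey of candidate strategies (conformal mapping, Poisson kernel, reflection, the substitution $w=z^{\rho}$, Borel--Carath\'eodory on disks), each of which you yourself flag as obstructed, and the final plan --- apply Borel--Carath\'eodory on a disk centered near $z_{0}$ and ``absorb everything into the stated form'' --- does not work for the points the lemma must cover. The estimate is claimed for \emph{every} $z\in\Omega_{R}$ with $|z|=r$, including real $z$ and points with small imaginary part; for such $z_{0}$ the largest disk centered at $z_{0}$ on which the hypotheses hold has radius comparable to $\dist(z_{0},\partial\Omega_{R})$, which can be arbitrarily small even when $r$ is far from $R$, so any disk-based Borel--Carath\'eodory bound degenerates exactly where the stated bound does not. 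Extending across the axis is not ``harmless'': $f$ is only assumed analytic in \emph{some} neighborhood of $\Omega_{R}$, and the hypothesis $\Re f\le A(R)$ is only available on $\Omega_{R}$. Moreover Borel--Carath\'eodory requires the value of $f$ at the \emph{center} of the disk, so ``anchoring at a point of the inner arc'' does not supply the needed term. None of these routes produces the specific constant $2r^{\rho}/(R^{\rho}-r^{\rho})$ with the specific $A(R)$ of the statement.

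The missing idea is that no conformal map or disk reduction is needed at all. The paper forms the single auxiliary function
$$g(z)=\frac{1}{z^{\rho}}\,\frac{f(z)}{2A(R)-f(z)},$$
analytic on $\Omega_{R}$ because $\Re\bigl(2A(R)-f(z)\bigr)\ge A(R)>0$ there, and checks $|g|\le R^{-\rho}$ on each of the three boundary pieces of $\Omega_{R}$: on $|z|=R$ one uses that $\Re f\le A(R)$ implies $|2A(R)-f(z)|\ge|f(z)|$; on the real segment one uses $|f(x)|\le C_{0}|x|^{\rho}$ and $|2A(R)-f(x)|\ge 2A(R)-|f(x)|\ge C_{0}R^{\rho}$; on $|z|=1$ one uses $A(R)\ge MR^{\rho}\ge|f(z)|R^{\rho}$. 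The maximum principle then gives $|g|\le R^{-\rho}$ on all of $\Omega_{R}$, and unwinding this at $|z|=r$ yields $(R^{\rho}-r^{\rho})|f(z)|\le 2r^{\rho}A(R)$, which is exactly the claimed inequality. Each term in the definition of $A(R)$ exists precisely to make one of the three boundary estimates work --- a structure your proposal does not exploit.
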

\begin{proof}
Set 
$$g(z)= \frac{1}{z^\rho} \frac{f(z)}{2A(R)-f(z)}$$
which is analytic in a neighborhood of $\Omega_R$.  We bound $|g|$ on the 
boundary of $\Omega_R$.   If  $z\in \Omega_R$ has $|z|=R$,
then $$|g(z)|\leq \frac{1}{R^\rho} \frac{|f(z)|}{|2A(R)-f(z)|}\leq \frac{
|f(z)|}{R^\rho |f(z)|}=\frac{1}{R^\rho}.$$  
Notice that if 
$x\in \Omega_R\cap \Real$, 
$$|g(x)|\leq \frac{1}{|x|^\rho}\frac{C_0 |x|^\rho}{R^\rho C_0} \leq 
\frac{1}{R^\rho}.$$
Moreover, if $z\in \Omega_R$ has $|z|=1$, since 
$A(R)\geq |f(z)|R^\rho$, $|g(z)|\leq 1/R^\rho$.  
   Thus, by the maximum principle 
$|g(z)|\leq 1/R^\rho$ for all $z\in \Omega_R$.  

Suppose $z\in \Omega_R$ with $|z|=r$, $1<r<R$.  Then
$$|f(z)|\leq |z|^\rho |2 A(R)-f(z)| R^{-\rho}\leq  r^\rho (2A(R)+|f(z)|) R^{-\rho}.$$
Rearranging, we find 
$$(R^\rho -r^\rho )|f(z)|\leq 2 r^\rho A(R),$$
or $$|f(z)|\leq \frac{ 2 r^\rho}{R^\rho -r^\rho}A(R).$$
\end{proof}

\begin{lemma} \label{l:p-lapp}
Let $\Omega=\{ z:\; \Im z\geq 0,\;|z|\geq 1\}$ and suppose $f$ is analytic
in a neighborhood of $\Omega$, and there are  constants $\rho_0$, $C_0$, so that
$|f(z)|\leq C_0 \exp (C_0 |z|^{\rho_0})$ for all $z\in \Omega$.  Suppose there are 
constants $C_1$, $\rho>0$ so that for all 
$x>1$,  
$|\int_1^x f'(t)/f(t)dt|\leq C_1 |x|^\rho$ and for all
$x<-1$,  $|\int_x^{-1} f'(t)/f(t)dt|\leq C_1 |x|^\rho$.
  If, in addition, $f$ does not vanish 
in $\Omega$, then  there is a constant $C_3$ so that $|f(z)|\leq 
C_3 \exp(C_3|z|^\rho)$ for all
$z\in \Omega$.
\end{lemma}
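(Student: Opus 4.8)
The plan is to exploit the fact that $f$ is zero-free in $\Omega$ to pass to a logarithm, and then to control $\log f$ by a Phragmén–Lindelöf / Carathéodory-type argument using the hypotheses on the boundary integrals of $f'/f$. First I would fix a branch of $\log f(z)$ on the simply connected region $\Omega$ (this is legitimate precisely because $f$ is analytic and non-vanishing there), normalized so that, say, $\log f(1) = 0$ after dividing $f$ by the constant $f(1)$ if necessary (a harmless renormalization changing $C_3$). The global growth bound $|f(z)| \le C_0 \exp(C_0|z|^{\rho_0})$ gives $\Re \log f(z) = \log|f(z)| \le \log C_0 + C_0|z|^{\rho_0}$, so $\log f$ is an entire-type function on $\Omega$ of finite order. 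The goal is to improve the exponent from $\rho_0$ down to $\rho$, and for this only an upper bound on $\Re \log f$ is needed, which is exactly what a Carathéodory-type estimate trades for.

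Next I would extract boundary information. On the positive real axis, $\log f(x) - \log f(1) = \int_1^x f'(t)/f(t)\,dt$, so the hypothesis gives $|\log f(x)| \le |\log f(1)| + C_1|x|^\rho$ for $x > 1$; similarly $|\log f(x) - \log f(-1)| \le C_1|x|^\rho$ for $x < -1$, and one must also note that $|\log f|$ is bounded on the unit semicircle $|z|=1$, $\Im z \ge 0$, by a constant, since $f$ is continuous and non-vanishing on that compact arc. Thus on the entire boundary of $\Omega_R = \{1 \le |z| \le R,\ \Im z \ge 0\}$ that lies on $\Real \cup \{|z|=1\}$ we have $|\log f(z)| \le C_0' |z|^\rho$ for a suitable constant. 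Now I would apply Lemma \ref{l:caratheodorytype} to the function $F = \log f$ on $\Omega_R$: the hypothesis "$|F(x)| \le C_0 |x|^\rho$ on the real part of $\Omega_R$" holds, $M = \max_{|z|=1}|F(z)|$ is bounded independently of $R$, and $\max_{z \in \Omega_R}\Re F(z) = \max \log|f(z)| \le \log C_0 + C_0 R^{\rho_0}$. Hence $A(R) \le C\bigl(R^\rho + R^{\rho_0}\bigr) \le C R^{\max(\rho,\rho_0)}$, and the lemma yields, for $|z| = r$ with $1 < r < R$,
$$|\log f(z)| = |F(z)| \le \frac{2 r^\rho}{R^\rho - r^\rho} A(R).$$

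Finally I would optimize in $R$. Choosing $R = 2^{1/\rho} r$ (so that $R^\rho - r^\rho = r^\rho$ and $r/R$ is a fixed fraction) gives $|\log f(z)| \le 2 A(2^{1/\rho}r) \le C' r^{\max(\rho,\rho_0)}$, which is not yet good enough — it still carries $\rho_0$. The remedy is the standard iteration: the bound just obtained improves the global exponent from $\rho_0$ to $\max(\rho, \rho_0)$ only if $\rho_0 \le \rho$ already, so instead one reruns the estimate with the freshly improved bound $\log|f(z)| \le C' |z|^{\rho_1}$, $\rho_1 = \max(\rho,\rho_0)$ — wait, that is no improvement. The correct move is to let $R \to \infty$ with $r$ fixed: since $\rho_0$ may exceed $\rho$, I instead choose $R$ growing slowly, e.g. $R = r + 1$ or $R^\rho = r^\rho + r^{\rho - \delta}$, balancing $A(R) \sim R^{\rho_0}$ against the denominator $R^\rho - r^\rho \sim \rho r^{\rho-1}(R-r)$; taking $R - r$ a small power of $r$ forces the product $\frac{r^\rho}{R^\rho - r^\rho}R^{\rho_0}$ down to $C r^\rho$ provided $\rho_0 < \rho + 1$, and if $\rho_0$ is larger one bootstraps in finitely many steps, each application of Lemma \ref{l:caratheodorytype} lowering the admissible exponent until it reaches $\rho$. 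The main obstacle is precisely this bootstrapping bookkeeping: arranging the choice of $R = R(r)$ so that a single (or finitely iterated) application of the Carathéodory bound collapses the a priori order $\rho_0$ down to the sharp $\rho$ coming from the boundary data, while keeping all constants uniform; once $|\log f(z)| \le C_3|z|^\rho$ on $\Omega$, exponentiating gives $|f(z)| \le e^{C_3|z|^\rho}$ (absorbing the renormalization constant), which is the claim.
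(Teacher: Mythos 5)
Your setup matches the paper's: pass to $g=\log f$ (legitimate since $f$ is zero-free on the simply connected $\Omega$), read off $|g(x)|\le C|x|^\rho$ on the real boundary from the hypotheses on $\int f'/f$, note $\Re g=\log|f|\le C(1+|z|^{\rho_0})$, and apply Lemma \ref{l:caratheodorytype} to $g$. Up to that point you are reproducing the paper's intermediate step, which yields $|g(z)|\le C(1+|z|^{\rho_0})$ on $\Omega$.

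The final step, however, contains a genuine gap. No choice of $R=R(r)$ in the Carath\'eodory bound can push the exponent below $\rho_0$: writing $u=(R/r)^\rho$, the quantity $\frac{r^\rho}{R^\rho-r^\rho}A(R)$ is at least $c\, r^{\rho_0}\frac{u^{\rho_0/\rho}}{u-1}$, whose infimum over $u>1$ is a constant times $r^{\rho_0}$, because $A(R)$ necessarily contains the term $\max_{\Omega_R}\Re g\sim R^{\rho_0}$. Your specific claim that taking $R-r$ a small power of $r$ forces the bound down to $Cr^\rho$ is backwards: with $R-r=r^\delta$, $\delta<1$, one gets $R^\rho-r^\rho\approx\rho r^{\rho-1+\delta}$ and hence a bound of size $r^{1-\delta+\rho_0}$, which is \emph{worse} than $r^{\rho_0}$; and the proposed bootstrap never improves, since each pass returns the same exponent $\rho_0$. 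The missing idea is a Phragm\'en--Lindel\"of step: having established the polynomial bound $|g(z)|\le C(1+|z|^{\rho_0})$ on all of $\Omega$ together with $|g|\le C|z|^\rho$ on $\partial\Omega$, one applies Phragm\'en--Lindel\"of (in the form of \cite[Theorem VI.4.1, Corollary VI.4.2]{conway}) to $h(z)=g(z)/(i+z)^\rho$, which is bounded on $\partial\Omega$ and of polynomial (hence sub-exponential) growth in the half-plane; this gives $|g(z)|\le C|i+z|^\rho$ throughout $\Omega$, and exponentiating $\Re g$ finishes the proof. Without this (or an equivalent) device the reduction from $\rho_0$ to $\rho$ in the interior does not follow from the Carath\'eodory-type lemma alone.
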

\begin{proof}In the proof we 
shall denote by $C$ a constant the value of which may change from line to line without comment.  

Since $f$ is nonvanishing in $\Omega$, there is a function $g$ analytic on
$\Omega$ so that $\exp g(z)= f(z)$. 
Since $g'(z)= f'(z)/f(z)$,  
$$g(x)-g(1)= \int_1^x \frac
{f'(t)}{f(t)} dt\; \text{if}\; x>1$$
so that $|g(x)|\leq C |x|^\rho+|g(1)|$ when $x\geq 1$ for
some constant $C$.  A similar argument gives a similar
 bound for $x\leq -1$.  

We now assume $\rho < \rho_0$ since otherwise there is nothing to prove.
We give a  bound on the growth of $g$ at infinity which is more 
than adequate to allow us to apply a version of the Phragm\'en-Lindell\"of theorem 
as we will below.
Since $\Re g(z)=\log |f(z)|$, for all $z\in  \Omega$,
$\Re g(z) \leq  C (1+ |z|^{\rho_0})$.
Applying 
Lemma \ref{l:caratheodorytype}, we find that 
$|g(z)|\leq C (1+|z|^{\rho_0})$ for all $z\in \Omega$.  

Consider the function $h(z)= g(z) / (i+z)^{\rho}$.  This is an analytic 
function in a neighborhood of $\Omega$ and is bounded on $\partial \Omega$.
Thus, by a version of the Phragm\'en-Lindell\"of theorem
(proved, for example, by an easy modification of the proof of 
\cite[Corollary VI.4.2]{conway} using \cite[Theorem VI.4.1]{conway}), $h$
is bounded in $\Omega$.  This implies that for 
$z\in \Omega$, $|f(z)| = \exp (\Re g(z)) 
\leq \exp (C(1+|z|)^{\rho})$ for some constant $C$.
\end{proof}


We shall use the notation 
$E_0(z) =1-z$ and $E_p(z) = (1-z)\exp(z+z^2/2+...+ z^p/p)$ for $p\in \Natural$
for a canonical factor. The proof of the following lemma bears many similarities
to proofs for estimates of canonical products; see for 
example \cite[Lemma I.4.3]{levin}. 

\begin{lemma}\label{l:cproductreals}
Let $\{a_j\}\subset \Complex$ be a set of not necessarily 
distinct points in the open 
upper half plane, with $|a_1|\leq |a_2|\leq ...$ and suppose for 
some constants $C_0$ and $\rho$
$$n(r)\defeq \# \{ j :\; |a_j|\leq r \}
 \leq C_0 r^\rho\; \text{ when} \; r\geq 1.$$
Suppose $\rho > 0$ is not an integer, let $p$ be the greatest
integer less than 
$\rho$, and 
set 
$$f(z)=\prod_{n=1}^{\infty} \frac{E_p(z/\overline{a_n})}{ E_p(z/a_n)}.$$
Then for $x\in \Real$
$$\left|\int_0^x \frac{f'(t)}{f(t)} dt\right| = O(|x|^\rho)$$
as $|x|\rightarrow \infty$.
\end{lemma}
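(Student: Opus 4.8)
The plan is to exploit the fact that, for the logarithmic-derivative integral, the contributions of $E_p(z/a_n)$ and $E_p(z/\overline{a_n})$ are complex conjugates of each other along the real axis, so that $f'(t)/f(t) = g'(t)/\overline{g'(t)}$-type cancellation is not what helps; rather, what matters is that $\int_0^x f'(t)/f(t)\,dt = \log f(x) - \log f(0)$ (choosing a branch), and $\log f(x)$ is purely imaginary-plus-controlled because $|f(x)|$ is comparable to a ratio that we can bound. Concretely, for $t \in \Real$ we have $|t/\overline{a_n}| = |t/a_n|$, and $E_p(z/\overline{a_n})/E_p(z/a_n)$ has modulus $\le$ something and argument bounded; but the cleanest route is to write $\frac{f'(t)}{f(t)} = \sum_n \left( \frac{d}{dt}\log E_p(t/\overline{a_n}) - \frac{d}{dt}\log E_p(t/a_n)\right)$ and integrate termwise.

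First I would fix $x>0$ (the case $x<0$ is symmetric) and split the product according to whether $|a_n| \le 2x$ or $|a_n| > 2x$. For the near points $|a_n|\le 2x$, I would use the elementary identity $\int_0^x \frac{d}{dt}\log E_p(t/a) \, dt = \log E_p(x/a) = \log(1-x/a) + \sum_{k=1}^p \frac{x^k}{k a^k}$, and bound $|\log E_p(x/a)|$ together with its conjugate partner: the polynomial tails $\sum_{k=1}^p \frac{x^k}{k}\left( \frac{1}{a^k} - \frac{1}{\overline a^k}\right)$ are purely in terms of $|x/a|\le$ quantities of size $O((x/|a|)^p)$, while the difference $\log(1-x/\overline a) - \log(1-x/a)$ has modulus $O(1)$ away from $a$ and a mild logarithmic singularity $O(\log\frac{1}{|x-a|})$ near it — crucially integrable and, after summing over the $O(x^\rho)$ near points with $|a_n|\ge 1$, contributing $O(x^\rho)$ after a standard dyadic decomposition of the annuli $2^{j}\le |a_n| < 2^{j+1}$ using $n(r)\le C_0 r^\rho$. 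For the far points $|a_n| > 2x$, I would instead not integrate the closed form but estimate the integrand directly: for $|t|\le x < |a_n|/2$ one has the standard bound $\left| \frac{d}{dt}\log E_p(t/a) \right| = \left| \frac{(t/a)^p}{a(1-t/a)}\right| \le C \frac{|t|^p}{|a|^{p+1}}$, so the conjugate-difference integrand is $O(|t|^p/|a|^{p+1})$ as well, and $\int_0^x O(t^p/|a_n|^{p+1}) \, dt = O(x^{p+1}/|a_n|^{p+1})$; summing over $|a_n|>2x$ against $n(r)\le C_0 r^\rho$ with $p+1>\rho$ gives $O(x^{p+1} \cdot x^{\rho - p - 1}) = O(x^\rho)$, again by dyadic decomposition (here an Abel-summation / integration-by-parts against $dn(r)$).

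The main obstacle I expect is the bookkeeping near the real axis: although each $a_n$ lies in the \emph{open} upper half plane, points can approach $\Real$, so the integrand $\frac{d}{dt}\log E_p(t/a_n)$ can be large for $t$ real near $\Re a_n$ when $\Im a_n$ is small. The key point that rescues this is that we integrate — $\int_0^x \frac{dt}{|t - a_n|}$ is $O(\log\frac{1}{\Im a_n})$ at worst but in fact, because we take the difference with the conjugate term, $\log(1-x/a_n) - \log(1-x/\overline{a_n}) = \log\frac{a_n - x}{a_n}\cdot\frac{\overline a_n}{\overline a_n - x}$, whose modulus is $O(1)$ for all real $x$ since $|a_n - x|$ and $|\overline a_n - x|$ are equal. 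So the would-be singularity cancels in the difference, and one is left only with the harmless $O(1)$-per-point bound on near points and the polynomial tails, both summable to $O(x^\rho)$ via $n(r) = O(r^\rho)$ and $\rho$ non-integer (so that $p < \rho < p+1$ strictly, giving convergence of both the near-sum exponent comparison and the far-sum). I would organize the near-point estimate as: $\sum_{|a_n|\le 2x}\left(O(1) + O\!\big((x/|a_n|)^p\big)\right)$, bound the first sum by $n(2x) = O(x^\rho)$, and the second by $x^p \sum_{|a_n|\le 2x} |a_n|^{-p} = x^p \cdot O(x^{\rho - p}) = O(x^\rho)$ using the dyadic estimate $\sum_{2^j \le 2x}(2^j)^{\rho - p} = O(x^{\rho-p})$ since $\rho - p > 0$.
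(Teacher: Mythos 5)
Your proposal is correct, and its skeleton coincides with the paper's: both integrate the logarithmic derivative termwise, split the zeros at $|a_n|\le 2|x|$ versus $|a_n|>2|x|$, treat the far zeros by bounding the integrand pointwise by $C|t|^p|a_n|^{-p-1}$, and convert sums over zeros into integrals against $dn(r)$ using $n(r)\le C_0r^\rho$ together with $p<\rho<p+1$. Where you genuinely differ is the near-zone estimate. The paper keeps $E_p'/E_p=-z^p/(1-z)$ intact, writes the conjugate difference as $2i|a|^{-2p}t^p\bigl(t\Im(a^p)-\Im(a^{p+1})\bigr)/|t-a|^2$, and exploits the explicit factor $\beta=\Im a$ in the numerator so that $\int_0^x \beta t^q/((t-\alpha)^2+\beta^2)\,dt$ reduces to bounded arctangents, yielding per-zero contributions $|x|^p|a_j|^{-p}+|x|^{p-1}|a_j|^{-p+1}$. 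You instead integrate in closed form, separating $\log E_p$ into $\log(1-z)$ plus its polynomial tail: the tails contribute $O((x/|a_n|)^p)$ per zero, and the logarithmic parts pair into $\log\bigl[(1-x/\overline{a_n})/(1-x/a_n)\bigr]$, which is purely imaginary because the ratio is unimodular on $\Real$, and is $O(1)$ because the branch selected by the integral accumulates argument at most $2\pi$ (since $a_n-t$ stays in the open upper half plane for real $t$); that last point deserves one explicit sentence, as unimodularity alone bounds only the principal value, not the branch the integral produces. Both arguments rest on the identical cancellation $\frac{1}{a-t}-\frac{1}{\overline a-t}=-2i\Im a/|t-a|^2$; yours isolates it more transparently and gives the cleaner per-zero bound $O(1)+O((x/|a_n|)^p)$, while the paper's version avoids all branch-of-logarithm bookkeeping by never forming a logarithm at all.
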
 
\begin{proof}
We note first that
our assumption on $n(r)$ ensures that the canonical 
products converge, so that $f$ is a meromorphic function on $\Complex$.
Moreover, by assumption $f$ has neither poles nor zeros on the real line.

A computation shows that $E_p'(z)/E_p(z)= -z^p/(1-z)$.  Thus
\begin{equation}\label{eq:logderiv}
\frac{f'(x)}{f(x)}= \sum_{n=1}^{\infty}\left( \frac{(x/\overline{a_n})^p}{x-\overline{a_n}}-\frac{(x/a_n)^p}{x-a_n}\right).
\end{equation}

Let $a\in \Complex$, $\Im a >0 $, $t\in \Real$.
 Then
\begin{align}\label{eq:basic}
 \frac{(t/\overline{a})^p}{t-\overline{a}}-
\frac{(t/a)^p}{t-a} & = \frac{1}{|a|^{2p}}
 t^p \left(\frac{ a^p(t-a)-\overline{a}^p (t-\overline{a})}{|t-a|^2}\right)
\nonumber
\\ & = 2i \frac{1}{|a|^{2p}}
 t^p \left( \frac{ t \Im (a^p) - \Im (a^{p+1})}{|t-a|^2}\right).
\end{align}
Now set $a=\alpha + i\beta$, $\beta>0$, and
note that 
$|\Im a^p| \leq p\beta |a|^{p-1}$.  Thus  for $x\in \Real$ 
\begin{align}\label{eq:intbd}
\left| \int_0^x \left( \frac{(t/\overline{a})^p}{t-\overline{a}}-
\frac{(t/a)^p}{t-a}\right) dt\right| & 
\leq \frac{2}{|a|^{p+1}}  \left|
\int_0^x \frac{ p|t|^{p+1}\beta + (p+1) |a||t|^p\beta}{(t-\alpha)^2+ \beta^2} dt\right|.
\end{align}
Now for $q>0$
\begin{equation*}
 \int_0^x  \frac{ t^{q}\beta }{(t-\alpha)^2+ \beta^2} dt
= x^{q} \arctan((x-\alpha)/\beta)- q\int_0^x
t^{q-1} \arctan((t-\alpha)/\beta) dt.
\end{equation*}
Using that for $s\in \Real$, $|\arctan s| <\pi/2$, we find that for $|x|>1$
\begin{equation}\label{eq:est1}
\left|\int_0^x  \frac{ t^{q}\beta }{(t-\alpha)^2+ \beta^2} dt \right|
\leq C |x|^{q}
\end{equation}
for some constant $C$, independent of $\alpha$ and $\beta$.  

To prove the lemma, we will split  $\{a_j\}$ into two sets, depending on 
the relative size of  $|a_j|$ and $2|x|$.  
For $|a_j|\leq 2|x|$, we 
first note that 
$$p\int _0^x \frac{ \beta t^{p+1}}{(t-\alpha)^2+\beta^2}dt 
= p\beta \int _0^x t^{p-1}\left( 1+ \frac{ 2\alpha t -(\alpha^2+\beta^2) }
{(t-\alpha)^2+\beta^2}\right)dt$$
and
use (\ref{eq:intbd}) and (\ref{eq:est1})  to
get
\begin{align}\label{eq:smalla}
\sum_{|a_j|\leq 2|x| } \left| 
\int_0^x 
\left( \frac{(t/\overline{a_j})^p}{t-\overline{a_j}}-
\frac{(t/a_j)^p}{t-a_j}\right) dt\right|
&  \leq C  \sum_{|a_j|\leq 2|x|}
\left( |x|^p |a_j|^{-p} +  |x|^{p-1}|a_j|^{-p+1}
\right).
\end{align}
Since 
\begin{equation}\label{eq:apowersum}
\sum_{1\leq |a_j|\leq r}|a_j|^{q} = \int_1^r t^{q}dn(t)= r^qn(r)-n(1)-\int_1^r
qt^{q-1} n(t)dt
\end{equation}
applying our upper bound on $n(r)$ we get from (\ref{eq:smalla})
\begin{equation}\label{eq:asmallbound}
\sum_{|a_j|\leq 2|x| } \left| 
\int_0^x\left( \frac{(t/\overline{a_j})^p}{t-\overline{a_j}}-
\frac{(t/a_j)^p}{t-a_j}\right) dt\right|\leq
C(|x|^{\rho}+1).
\end{equation}

Now we bound the contribution of the $a_j$ with $|a_j|> 2 |x|$.  
For this we use (\ref{eq:basic}) more directly.  Here
\begin{align*}  & 
\sum_{|a_j|>2|x|}
\left| \int_0^x 
\left( \frac{(t/\overline{a_j})^p}{t-\overline{a_j}}-\frac{(t/a_j)^p}{t-a_j}\right) dt \right|   \\ & 
= 2 \sum_{|a_j|>2|x|}
\left| \int_0^x 
 \frac{1}{|a_j|^{2p}}
 t^p \left( \frac{ t \Im (a_j^p) - \Im (a_j^{p+1})}{|t-a_j|^2}\right)dt\right|\\
& \leq C  \sum_{|a_j|>2|x|}
\left| \int_0^x \frac{1}{|a_j|^{2p}}
  \left( \frac{ |t|^{p+1} |a_j|^p + |t|^p|a_j|^{p+1})}{|a_j|^2}\right) dt \right|
\\ & \leq C  \sum_{|a_j|>2|x|} (|x|^{p+2} |a_j|^{-p-2}+|x|^{p+1}|a_j|^{-p-1}).
\end{align*}
Applying the analog of (\ref{eq:apowersum}) and using the upper bound on
$n(r)$  we obtain
$$\sum_{|a_j|>2|x|} |a_j|^{-q} \leq C|x|^{\rho-q}$$
provided $q>\rho$, giving us
$$\sum_{|a_j|>2|x|}
\left| \int_0^x 
\left( \frac{(t/\overline{a_j})^p}{t-\overline{a_j}}-\frac{(t/a_j)^p}{t-a_j}\right) dt \right| \leq C|x|^\rho.$$
Combined with (\ref{eq:asmallbound}), this completes the proof of the lemma.
\end{proof}


\begin{prop}\label{p:complex}
Let $f$ be a function analytic in a neighborhood of $
\Omega=\{ z:\: \Im z \geq 0, \; |z|\geq 1 \}$. Suppose $f$ does not vanish on $\Real \cap \Omega$,
and
let $n(r)$ be the number of zeros of $f$ in 
$\{ z:\; \Im z \geq 0, \; 1\leq |z|\leq r\}$
counted with multiplicity.
 Suppose that that there are constants $C_0$ and 
$\rho$, $\rho$ not an integer, so that 
$$n(r) \leq C_0(1+r^\rho )$$
and
$$\left| \frac{f'(x)}{f(x)}\right| \leq C_0 (1+|x|^{\rho-1}) \; \text{ for all } 
x\in \Real \; \text{with}\; |x|\geq 1.$$  Suppose in addition that there are some constants $\rho_1$, $C_1$
so that $\log |f(z)|\leq C_1(1+|z|^{\rho_1}) $ for all $z\in \Omega$.
Then there is a constant $C$ so
that $|f(z)|\leq Ce^{|z|^\rho} $ for $z\in \Omega$.
\end{prop}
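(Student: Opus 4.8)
The plan is to divide out the zeros of $f$ lying in $\Omega$ by means of the meromorphic function of Lemma~\ref{l:cproductreals}, thereby reducing matters to the non-vanishing situation treated in Lemma~\ref{l:p-lapp}.

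Write $p=\lfloor\rho\rfloor$ and let $a_1,a_2,\dots$ (with $|a_1|\le|a_2|\le\cdots$) be the zeros of $f$ in $\Omega$, listed with multiplicity. Since $f$ does not vanish on $\Real\cap\Omega$, every $a_n$ has $\Im a_n>0$, and $\#\{n:|a_n|\le r\}\le C_0(1+r^\rho)$; hence the function $B(z)=\prod_n E_p(z/\overline{a_n})/E_p(z/a_n)$ of Lemma~\ref{l:cproductreals} is defined, is meromorphic on $\Complex$ with poles exactly at the $a_n$ and zeros at the $\overline{a_n}$, and satisfies $|B(x)|=1$ for $x\in\Real$ because $E_p$ has real coefficients. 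Put $F=fB$. The poles of $B$ are cancelled by the zeros of $f$, so $F$ is analytic in a neighbourhood of $\Omega$; the only zeros of $F$ are the $\overline{a_n}$ (in the open lower half plane) together with any zeros of $f$ with $|z|<1$, so $F$ is non-vanishing on $\Omega$. On the real line $F'/F=f'/f+B'/B$, and integrating the hypothesis $|f'(x)/f(x)|\le C_0(1+|x|^{\rho-1})$ and invoking Lemma~\ref{l:cproductreals} for $B$ show that $\bigl|\int_1^x (F'/F)\,dt\bigr|=O(|x|^\rho)$ for $x>1$ and $\bigl|\int_x^{-1}(F'/F)\,dt\bigr|=O(|x|^\rho)$ for $x<-1$.

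The remaining --- and, I expect, hardest --- step is to supply the crude bound $|F(z)|\le C_0'\exp(C_0'|z|^{\rho_0})$ on $\Omega$, for some $\rho_0$, that Lemma~\ref{l:p-lapp} also requires. From $\log|F(z)|=\log|f(z)|+\log\bigl|\prod_n E_p(z/\overline{a_n})\bigr|-\log\bigl|\prod_n E_p(z/a_n)\bigr|$, the standard estimates for canonical products (cf.\ the reference cited just before Lemma~\ref{l:cproductreals}) bound the first of these products above by $\exp(K(1+|z|^{\rho_0}))$ everywhere, and the second below by $\exp(-K(1+|z|^{\rho_0}))$ for $z$ outside a union $\mathcal G$ of discs about the $a_n$ with summable radii; together with $\log|f(z)|\le C_1(1+|z|^{\rho_1})$ this gives $\log|F(z)|\le K'(1+|z|^{\rho_0})$ (after enlarging $\rho_0$ if necessary) for $z\in\Omega\setminus\mathcal G$. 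To handle $z\in\Omega\cap\mathcal G$ with $|z|$ large, choose radii $1<R_1<|z|<R_2$ with $R_2-R_1\le 2$ and with the circles $|w|=R_1,\,|w|=R_2$ disjoint from $\mathcal G$ (possible because the radii of the discs in $\mathcal G$ are summable, so only a set of radii of finite measure is excluded), and apply the maximum principle to $\log|F|$ on $\{R_1\le|w|\le R_2\}\cap\Omega$: on the two circular arcs of the boundary the bound just obtained applies since those arcs miss $\mathcal G$, while on the two real segments $\log|F(x)|=\log|f(x)|=O(|x|^\rho)$ because $|B|\equiv1$ on $\Real$ and $f$ is non-vanishing there. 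Hence $\log|F(z)|\le K''(1+|z|^{\rho_0})$ on all of $\Omega$, the range $|z|\le 2$ being trivial by analyticity of $F$ on a compact set.

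Lemma~\ref{l:p-lapp} now applies to $F$ and yields $|F(z)|\le C_3\exp(C_3|z|^\rho)$ on $\Omega$. Since $f=F/B$, it remains to bound $1/|B|$; here one needs the inequality $\log|B(z)|\ge -C(1+|z|^\rho)$ on $\Omega$ --- the half plane counterpart of Lemma~\ref{l:cproductreals}, whose exponent is $\rho$ rather than a larger power precisely because of the cancellation between the factors at $a_n$ and $\overline{a_n}$ --- which one obtains by running the Carath\'eodory/Phragm\'en-Lindell\"of arguments of Lemmas~\ref{l:caratheodorytype} and~\ref{l:p-lapp} on $\log|B|$, using that $\log|B|\equiv 0$ on $\Real$, that $\log|B|$ has a crude polynomial bound established exactly as in the previous paragraph, and the bound on $\int_0^x(B'/B)\,dt$ from Lemma~\ref{l:cproductreals}. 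Combining these, $|f(z)|\le Ce^{C|z|^\rho}$ on $\Omega$, which is the asserted bound once the constant in the exponent is absorbed (enlarging $\rho$ to a nearby non-integer if need be).
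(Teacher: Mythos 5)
Your argument is essentially the paper's: your $F=fB$ is exactly the paper's $h=fg_1/g_2$, the crude intermediate bound is obtained the same way (canonical‐product estimates plus the maximum principle on regions bounded by circles that avoid the exceptional set, with the real segments controlled directly), and Lemma \ref{l:p-lapp} is then applied to $F$ just as in the paper. The one point to correct is the last step: you cannot literally ``run the arguments of Lemmas \ref{l:caratheodorytype} and \ref{l:p-lapp} on $\log|B|$,'' since $B=g_1/g_2$ has poles at the $a_n\in\Omega$ and $1/B=g_2/g_1$ vanishes there, while both lemmas require an analytic, non-vanishing function. No such detour is needed: $\log|1/B|=\log|g_2|-\log|g_1|$ is subharmonic on $\Omega$ (as $g_1$ has no zeros there), vanishes on the real segments, and is at most $C(1+r^{\rho})$ on the good circles $|z|=r$ supplied by the minimum-modulus estimate for $g_1$ together with the upper bound for $g_2$; the maximum principle between consecutive good circles then gives $\log|1/B(z)|\le C(1+|z|^{\rho})$ throughout $\Omega$, which is the bound you wanted. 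This is precisely how the paper closes the proof, applying the maximum principle to $\log|f|=\log|g_2h/g_1|$ directly rather than to $\log|1/B|$.
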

\begin{proof}

We will assume $\rho_1>\rho$ as otherwise there is nothing to prove.

To aid in notation, we set 
$$\Omega_R=
 \{ z\in \Complex: \; \Im z \geq 1\; \text{and}\; 1\leq |z|\leq R\}.$$

We prove this proposition by constructing a function to which we can 
apply Lemma \ref{l:p-lapp}.
Let $p$ denote the greatest integer less than 
$\rho$, and 
$\{ a_j\}$ the zeros of $f$ in $\Omega$, repeated according to 
multiplicity, with $|a_1|\leq |a_2|\leq ...$.  Set
$$h(z)=\frac{f(z)g_1(z)}{g_2(z)},$$ 
where 
$$g_1(z)= \prod_{n=1}^{\infty}  E_p(z/\overline{a_n}) \; \text{and}\; g_2(z)=
\prod_{n=1}^{\infty} E_p(z/a_n).$$
Note that $h$ is analytic in $\Omega$ and does
not vanish there.  

 As an intermediate step we show that $\log |h(z)|\leq C|z|^{\rho_1} $
for all $z\in \Omega$.  Recall we have
assumed $\rho_1>\rho$.  Here and below $C$ is a finite constant which
may change from line to line.
If $x\in \Real$, $1\leq |x|$, then $\log|h(x)|=\log|f(x)|\leq C_0(1+|x|^{\rho_1})$.
Moreover, from estimates on canonical products,
\begin{equation}\label{eq:gjbound}
\log |g_j(z)| \leq C(1+|z|^\rho), \; j=1,2
\end{equation}
for some constant $C$, see \cite[Lemma I.4.3]{levin}.   

To aid in notation, we set $\Omega_R= \{ z\in \Complex: \; \Im z \geq 1\; \text{and}\; 1\leq |z|\leq R\}$.

As is shown in the proof of \cite[Theorem I.12]{levin}, given $R>0$ and 
$0<\delta<1$ there is an $r_j\in [R, R(1-\delta)^{-1}]$ so that
for all $z\in \Complex$ with $|z|=r_j$, 
\begin{align} 
\log |g_j(z)| & \geq -\left(2+\log \frac{12 e}{\delta}\right) \log
\max_{|z|=2 e R(1-\delta)^{-1}} |g_j(z)|,\; \text{for $j=1,\;j$}.
\end{align}
Using (\ref{eq:gjbound}), this gives
\begin{align}\label{eq:g2lb}
\log |g_j(z)|
& \geq -C_{ \delta} (1+(R(1-\delta)^{-1})^\rho ),\; |z|=r_j,\; j=1,\;2.
\end{align}
Now fix a $\delta>0$, $\delta<1$.  Given any $R>1$, we can find an 
$r_2\in [R,R(1-\delta)^{-1}]$ as 
above so that (\ref{eq:g2lb}) holds for $j=2$.  
Then using that
$$\max_{z\in \Omega_R } \log |h(z)|
\leq \max_{ z\in \Omega_{r_2}} \log |h(z)|= \max_{z\in \partial \Omega_{r_2}}
\log |h(z)|,
$$
 our assumptions on $f$, and (\ref{eq:g2lb}) we find
$$\max_{ z\in \Omega_R}\log |h(z)|
\leq C_\delta (1+ (R(1-\delta)^{-1})^{\rho_1})+ C(1+R^\rho)\leq \tilde{C}_\delta (1+R^{\rho_1}).$$


                        
For $x\in \Real$, $|x|\geq 1$, 
\begin{align*}
\frac{h'(x)}{h(x)}& =\frac{f'(x)}{f(x)}+ 
\frac{(g_1/g_2)'(x)}{g_1(x)/g_2(x)}.
\end{align*}
By applying our assumptions on $f$ and Lemma \ref{l:cproductreals}, we find
that for $x>1$, $|\int_1^x h'(t)/h(t) dt| = O(x^\rho)$, and likewise
for $x<-1$, $|\int _{x}^{-1} h'(t)/h(t) dt|=O(|x|^{\rho})$.  By Lemma 
\ref{l:p-lapp}, there is a constant $C$ so that
\begin{equation}\label{eq:hbd}
\log |h(z)|\leq C(1+ |z|^\rho),\;  \text{when}\; z\in \Omega.
\end{equation}

Now we write 
$f(z) = g_2(z)h(z)/g_1(z)$, holomorphic in a neighborhood of
$\Omega$.
Given $R>1$ and $\delta$ satisfying $0<\delta<1$, as above
we choose $r_1\in [R,R(1-\delta)^{-1}]$
so that (\ref{eq:g2lb}) holds for $g_1$.  Using in addition
 (\ref{eq:gjbound}) and
(\ref{eq:hbd}), we find there is a constant so that
$$\log |f(z)|\leq C(1+(R(1-\delta )^{-1})^{\rho})\; \text{for}\; |z|=r_1,
\; \Im z\geq 0. $$  As in the proof of the bound on $h$, since 
$|h(x)|=|f(x)|$ for $x\in \Real \cap \Omega_R$, we find then that there
is a constant $C$ so that
$$\max_{z\in \Omega_R}\log |f(z)|\leq C(1+R^{\rho}).$$ 


\end{proof}




\section{A scalar function having zeros at the poles of the resolvent}
\label{s:defF}

We recall the derivation of some identities commonly used in 
the study of resonances for Schr\"odinger operators.
Let $V\in L^{\infty}_{\comp}(\Real^d)$ and let $d\geq 2$ be even. 
 There is no need to make an 
assumption on the sign of $V$ here.
We recall the notation $R_V(\lambda)=(-\Delta +V-\lambda^2)^{-1}$ when
$\lambda \in \Lambda_0$.  For such $\lambda$, 
$(-\Delta +V-\lambda^2)R_0(\lambda)= I+V R_0(\lambda)$
and by meromorphic continuation, $$R_0(\lambda) = R_V(\lambda)(I+VR_0(\lambda)), \; \lambda \in \Lambda.$$  Thus $R_V(\lambda)$ has a pole if and only if
$I+VR_0(\lambda) $ has a zero, and multiplicities agree.  Writing 
$V^{1/2}=V/|V|^{1/2}$ with the convention
that $V^{1/2}=0$ outside the support of $V$,
 we see that $I+VR_0(\lambda)$ has a zero if and only
if $I+V^{1/2}R_0(\lambda)|V|^{1/2}$ has a zero.  Consequently,
 $I+V^{1/2}R_0(\lambda)|V|^{1/2}$ is invertible for all but a finite number 
of points in $\overline{\Lambda_0}$.  
Thus, if $m\in \Integers$, $\lambda\in \overline{\lambda_0}$,
\begin{multline}I+V^{1/2}R_0(e^{im\pi}\lambda)|V|^{1/2}\\
= (I+V^{1/2}R_0(\lambda)|V|^{1/2})\left(I+(I+V^{1/2}R_0(\lambda)|V|^{1/2})^{-1}
V^{1/2}\left(R_0(e^{im\pi}\lambda)-R_0(\lambda)\right)|V|^{1/2}\right).
\end{multline}
But when $d$ is even
$$R_0(e^{im\pi}\lambda)-R_0(\lambda)= imT(\lambda)$$
with 
\begin{equation}
(T(\lambda)f)(x)=\alpha_d \lambda^{d-2}\int_{\Real^d} \int_{\Sphere^{d-1}}
e^{i\lambda (x-y)\cdot \omega} f(y) d\omega \; dy 
\end{equation} for $f\in L^{2}_{\comp}(\Real^d)$, with 
$\alpha_d =(2\pi)^{1-d}/2$, \cite[(1.32)]{lrb}. 
Moreover, $V^{1/2}T(\lambda) |V|^{1/2}$ is trace class. 
Thus, with at most a finite number of 
exceptions, the poles of $R_V(e^{im\pi}\lambda)$ with $\lambda \in \Lambda_0$
correspond, with multiplicity, to the zeros of 
\begin{equation}\label{eq:FmV}
F_{m,V}(\lambda) \defeq  \det \left(I+im(I+V^{1/2}R_0(\lambda)|V|^{1/2})^{-1}
V^{1/2}T(\lambda) |V|^{1/2}\right)
\end{equation}
in $\Lambda_0$.

\section{Lower bounds on $F_{m,\pm V}(i\sigma)$ when 
$V$ has fixed sign}

 In the remainder of this paper we assume $d\geq 2$ is even.

Let $V\geq 0$, $V\in L^{\infty}_{\comp}(\Real^d)$. In this
section we study the function $F_{m,\pm V}$ from (\ref{eq:FmV}).
For $\sigma \in \Real_+$, we shall use the shorthand $i\sigma$ to denote the 
point in the physical region with norm $\sigma$ and argument $\pi/2$. 
  Taking the positive
sign, $I+V^{1/2}R_0(i\sigma)|V|^{1/2}=I+V^{1/2}R_0(i\sigma)V^{1/2} $ is a positive
operator for $\sigma>0$.   When we choose the negative sign, 
we will additionally assume that $\sigma$ is 
chosen large enough that $I- \vhalf R_0(i\sigma) \vhalf $ is a positive
invertible operator; this is 
possible by, for example, insisting $\sigma>2(\| V \|_{\infty}+1)$ since
$\|R_0(i\sigma)\| \leq 1/\sigma^2$.  
With these assumptions on $\sigma$, using the properties of
 the determinant
and the fact that $V\geq 0$ we may rewrite the 
function $F_{m,\pm V}(i\sigma)$ from (\ref{eq:FmV}) as
\begin{align}\label{eq:Fmrewrite}
F_{m}(i\sigma)& =F_{ m,\pm  V}(i\sigma)\\ &= 
\det \left( I \pm im (I\pm \vhalf R_0(i\sigma) \vhalf )^{-1/2} \vhalf T(i\sigma)
 \vhalf 
(I\pm \vhalf R_0(i\sigma) \vhalf )^{-1/2} \right). \nonumber
\end{align}
 We shall 
obtain a lower bound on $F_m(i\sigma)$ as $\sigma \rightarrow \infty$.

The following proposition is central to the proof of Theorem \ref{thm:lbd}
 and is the main 
result of this section.  Related results were obtained in
odd dimensions in \cite[Section 5]{scv}. 
\begin{prop}\label{p:lb} Let $V\in L^{\infty}_{\comp}(\Real^d)$, $V\geq 0$, and let $V$ be bounded below
by $\epsilon \chi_{B}$ where $\epsilon>0$ and $\chi_B$ is the characteristic function of 
a nontrivial open ball. Let $m\in \Integers$, $m\not =0$.
 Then there is a constant $c_0>0$ so that $|F_{m,\pm V}(i\sigma)|\geq 
c_0\exp(c_0 \sigma ^d)$ for all sufficiently large $\sigma>0$.
\end{prop}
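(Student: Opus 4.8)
\emph{Plan.}  The strategy is to reduce, by two soft steps, to an explicit estimate for the single potential $\epsilon\chi_B$, and then to exploit the symmetry of the ball. Write $S=(I\pm\vhalf R_0(i\sigma)\vhalf)^{-1/2}$ and $C=C_\sigma\defeq\vhalf T(i\sigma)\vhalf$, so that $F_{m,\pm V}(i\sigma)=\det(I\pm im\,SCS)$ by \eqref{eq:Fmrewrite}. Since $d$ is even, $(i\sigma)^{d-2}\in\Real$, and after the substitution $\omega\mapsto-\omega$ the kernel $\int_{\Sphere^{d-1}}e^{-\sigma(x-y)\cdot\omega}\,d\omega$ of $T(i\sigma)/(\alpha_d(i\sigma)^{d-2})$ is real and symmetric; hence $T(i\sigma)$, $C$, $S$ and $B\defeq SCS$ are all self--adjoint, $B$ is trace class, and
$$|F_{m,\pm V}(i\sigma)|^2=\det(I+m^2B^2)=\prod_j\bigl(1+m^2s_j(B)^2\bigr).$$
For $\sigma$ large (for the $-$ sign, $\sigma>2(\|V\|_\infty+1)$, as in the standing hypothesis) one has $\|\vhalf R_0(i\sigma)\vhalf\|\le\|V\|_\infty/\sigma^2\le1$, so $\|S^{-1}\|^2\le2$; since $C=S^{-1}BS^{-1}$ this gives $s_j(B)\ge s_j(C)/\|S^{-1}\|^2\ge s_j(C)/2$, and therefore
$$|F_{m,\pm V}(i\sigma)|^2\ \ge\ \det\bigl(I+\tfrac{m^2}{4}C_\sigma^2\bigr).$$

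\emph{Monotonicity in the potential.}  Because $V\ge\epsilon\chi_B\ge0$ pointwise we have $B\subset\{V>0\}$ and $V^{1/2}\ge\epsilon^{1/2}\chi_B$, so the function $u\defeq\epsilon^{1/2}\chi_B/V^{1/2}$ (set equal to $0$ off $\{V>0\}$) takes values in $[0,1]$ and satisfies $uV^{1/2}=\epsilon^{1/2}\chi_B$. Writing $C^0_\sigma\defeq\epsilon\,\chi_B T(i\sigma)\chi_B$ we get $C^0_\sigma=M_uC_\sigma M_u$ with $0\le M_u\le I$, hence $s_j(C^0_\sigma)\le\|M_u\|^2 s_j(C_\sigma)\le s_j(C_\sigma)$ and
$$|F_{m,\pm V}(i\sigma)|^2\ \ge\ \det\bigl(I+\tfrac{m^2}{4}(C^0_\sigma)^2\bigr).$$
It thus suffices to bound this last determinant from below for $\sigma$ large; that is, it suffices to treat the potential $\epsilon\chi_B$.

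\emph{The ball.}  Since $T(i\sigma)$ has a translation-- and rotation--invariant kernel we may take $B$ centred at the origin with radius $R_0$; then $\chi_B T(i\sigma)\chi_B=\alpha_d(i\sigma)^{d-2}\chi_B E_{i\sigma}\chi_B$ with $E_{i\sigma}$ (the operator with kernel $\int_{\Sphere^{d-1}}e^{-\sigma(x-y)\cdot\omega}\,d\omega$) rotation--invariant, so it block--diagonalizes over the spherical--harmonic degrees $\ell$. The Funk--Hecke formula, $\int_{\Sphere^{d-1}}e^{\sigma r\,\hat y\cdot\omega}Y(\hat y)\,d\hat y=(\text{const})\,(\sigma r)^{-(d-2)/2}I_{\ell+(d-2)/2}(\sigma r)\,Y(\omega)$ for $Y\in\mathcal H_\ell$ (with $I_\nu$ the modified Bessel function), shows that on the $\ell$--th block $E_{i\sigma}$ is rank one, with absolute eigenvalue equal to a fixed positive constant times $\sigma^{-(d-2)}\int_0^{R_0}I_{\ell+(d-2)/2}(\sigma r)^2\,r\,dr$, repeated $\dim\mathcal H_\ell$ times. (The power $(\sigma r)^{-(d-2)}$ cancels against $r^{d-2}$ in the measure $r^{d-1}\,dr$, so the $\sigma^{\pm(d-2)}$ factors disappear from the final bound.) Consequently
$$\log|F_{m,\pm V}(i\sigma)|^2\ \ge\ \sum_{\ell\ge0}(\dim\mathcal H_\ell)\,\log\!\Bigl(1+c\,\Bigl(\int_0^{R_0}I_{\ell+\frac{d-2}{2}}(\sigma r)^2\,r\,dr\Bigr)^{\!2}\Bigr),\qquad c=c(m,\epsilon,d)>0.$$
For $\ell+\tfrac{d-2}{2}\le\sigma R_0/4$ and $R_0/2\le r\le R_0$ the order of the Bessel function is at most half its argument, and the uniform (Debye--type) lower bound for $I_\nu$ gives $I_{\ell+(d-2)/2}(\sigma r)\ge c_1\sigma^{-1/2}e^{c_2\sigma r}$ for fixed $c_1,c_2>0$; integrating yields $\int_0^{R_0}I_{\ell+(d-2)/2}(\sigma r)^2\,r\,dr\ge c_3\sigma^{-2}e^{2c_2\sigma R_0}$, so each such term is $\ge c_2\sigma R_0$ for $\sigma$ large. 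Using $\sum_{\ell\le L}\dim\mathcal H_\ell\asymp L^{d-1}$ with $L\asymp\sigma R_0/4$, we obtain $\log|F_{m,\pm V}(i\sigma)|^2\ge c_4(\sigma R_0)^d$ for $\sigma$ large, which is the proposition.

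\emph{Main obstacle.}  Steps one and two are soft; the crux is the last estimate, and specifically the fact that the modified Bessel functions $I_{\ell+(d-2)/2}(\sigma r)$ are of size $e^{c\sigma}$ \emph{uniformly} for all orders $\ell$ up to a fixed fraction of $\sigma R_0$, not merely for bounded $\ell$. This is precisely what makes the $\sim\sigma^{d-1}$ non--negligible blocks of $\chi_B E_{i\sigma}\chi_B$ each contribute $\sim\sigma$ to $\log|F_{m,\pm V}(i\sigma)|^2$, so that the product is the sharp order $\sigma^d$; a cruder input (e.g. the leading term of the power series of $I_\nu$) is useless for the small-$\ell$ blocks, where the full exponential growth $e^{c\sigma}$ is needed. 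Thus the one nontrivial ingredient is the uniform asymptotics of $I_\nu(t)$ for $\nu\lesssim t$, a more delicate estimate than the elementary canonical--product bounds used in Section 2.
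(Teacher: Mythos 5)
Your proposal is correct, and its skeleton coincides with the paper's: reduce by monotonicity to the model potential $\epsilon\chi_B$, block--diagonalize over spherical harmonics so that $T(i\sigma)$ is rank one on each block with eigenvalue $\asymp\int_0^{R_0}I_{\nu_\ell}(\sigma r)^2\,r\,dr$, and then feed in the uniform asymptotics of $I_\nu(t)$ for orders $\nu$ comparable to $t$ (the paper's \cite[9.7.7]{ab-st}); you correctly identify this last point as the one genuinely nontrivial input. Where you differ is in the implementation of the reduction, and your version is simpler. The paper proves monotonicity of $|F_{m,\pm V}(i\sigma)|$ in $V$ at the level of $B_{1,\pm,V}=S\,\vhalf T\vhalf S$ with $S=(I\pm\vhalf R_0\vhalf)^{-1/2}$; this forces it to compare the resolvent factors for the two potentials, which is done via Vasy's monotonicity lemma (\cite[Lemma 2.2]{vasy}) together with the operator--norm Lemma \ref{l:normbd}, and it separately needs Lemma \ref{l:B_1approx} (radiality of $V_0$) to pass from $B_{1,\pm,V_0}$ back to $\vhalf_0 T\vhalf_0$ on each block. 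You instead discard the factors $S$ at the outset, using only $\|S^{-1}\|^2=\|I\pm\vhalf R_0(i\sigma)\vhalf\|\le 2$ and the singular--value inequality $s_j(S^{-1}BS^{-1})\le\|S^{-1}\|^2 s_j(B)$ (the same tool as the paper's Lemma \ref{l:eigenvaluebd}), at the harmless cost of a factor $\tfrac14$ inside the determinant; after that, monotonicity in $V$ is immediate, since $\epsilon\chi_BT\chi_B=M_u(\vhalf T\vhalf)M_u$ with a multiplication contraction $M_u$. This eliminates the dependence on \cite{vasy} and on Lemma \ref{l:B_1approx} entirely. Two small points to tighten: (i) make explicit that for the ``$-$'' sign you take $\sigma>2(\|V\|_\infty+1)$ so that $S$ is well defined as a positive square root (you note this, but it is where the restriction to large $\sigma$ enters); (ii) the uniform lower bound $I_\nu(t)\ge ct^{-1/2}e^{ct}$ should be asserted only for $\nu/t$ in a compact subinterval of $(0,1)$ bounded away from $1$, which is what \cite[9.7.7]{ab-st} delivers for large $\nu$ and is all you use, since your final count already comes from the $\asymp\sigma^{d-1}$ harmonics with $\nu_\ell\asymp\sigma$.
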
  The proof is similar to the proofs of some results of
\cite{l-p, vasy} in that it uses both a property
of  monotonicity in $V$  and the fact that for potentials which are 
positive multiples of the characteristic function of a ball much
can be said by using a decomposition into spherical harmonics and special
functions.  However, the implementation of these underlying
 ideas is rather different here.

The proof of Proposition \ref{p:lb} uses the following lemma,
 a monotonicity result reminiscent of 
results of 
\cite{l-p,vasy}.  In fact, the proof of this lemma uses a result from
\cite{vasy}.
\begin{lemma}\label{l:normbd}  Let $V_1,\; V_2\in L^{\infty}(\Real^d)$ and 
suppose the support of $V_j$ is contained in $\overline{B}(R)=
\{ x\in \Real^d: |x| \leq R\}$ for $j=1,\;2$. 
Suppose $V_2(x)\geq V_1(x) \geq 0$ for all $x\in \Real^d$.  We use the 
convention that 
$\vhalf_1/\vhalf_2$ is $0$ outside the support of $V_1$.  Then 
$$\left\| (I + \vhalf_1 R_0(i\sigma) \vhalf_1)^{-1/2}\frac{ \vhalf_1}{\vhalf_2}
(I + \vhalf_2 R_0(i\sigma) \vhalf_2)^{1/2} \right\| \leq 1.$$  
If $\sigma \geq 2(\|V_2\|_{\infty}+1)$, then 
$$\left\| (I - \vhalf_1 R_0(i\sigma) \vhalf_1)^{-1/2} \frac{\vhalf_1}{\vhalf_2}
(I - \vhalf_2 R_0(i\sigma) \vhalf_2)^{1/2} \right\| \leq 1.$$ 
\end{lemma}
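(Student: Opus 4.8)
The plan is to reduce the claimed operator-norm bound to an order inequality between positive operators and then prove that inequality by a one-line algebraic identity. Write $G \defeq R_0(i\sigma) = (-\Delta + \sigma^2)^{-1}$, which by the Fourier transform is multiplication by $(|\xi|^2 + \sigma^2)^{-1}$ and hence a positive self-adjoint operator on $L^2(\Real^d)$ with $\|G\| \le \sigma^{-2}$. Put $A_\pm \defeq I \pm \vhalf_1 G \vhalf_1$ and $B_\pm \defeq I \pm \vhalf_2 G \vhalf_2$. In the $+$ case $A_+ \ge I$ and $B_+ \ge I$, so both are positive and invertible for every $\sigma > 0$. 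In the $-$ case, $\|\vhalf_j G \vhalf_j\| \le \|V_j\|_\infty \sigma^{-2} \le \|V_2\|_\infty \sigma^{-2} < 1$ for $j=1,2$ whenever $\sigma \ge 2(\|V_2\|_\infty+1)$, so $A_-$ and $B_-$ are again positive and invertible. In all cases, then, $A_\pm^{-1/2}$ and $B_\pm^{1/2}$ are well defined. Finally let $M$ be multiplication by $\mu \defeq \vhalf_1/\vhalf_2$, with the convention $\mu = 0$ off $\supp V_1$; since $0 \le V_1 \le V_2$ we have $0 \le \mu \le 1$ a.e., so $0 \le M \le I$, and $\mu \sqrt{V_2} = \sqrt{V_1}$ a.e., i.e.\ $M\vhalf_2 = \vhalf_2 M = \vhalf_1$ as multiplication operators.

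The first step is the elementary identity $\|X\|^2 = \|XX^*\|$, valid for any bounded operator $X$. Applied to $X = A_\pm^{-1/2} M B_\pm^{1/2}$, using that $A_\pm^{-1/2}$, $M$, $B_\pm^{1/2}$ are self-adjoint, it gives
\[
\bigl\| A_\pm^{-1/2} M B_\pm^{1/2} \bigr\|^2 = \bigl\| A_\pm^{-1/2}\,(M B_\pm M)\, A_\pm^{-1/2} \bigr\| ,
\]
so it suffices to prove the operator inequality $M B_\pm M \le A_\pm$. The second step is the computation
\[
M B_\pm M = M^2 \pm (M\vhalf_2)\,G\,(\vhalf_2 M) = M^2 \pm \vhalf_1 G \vhalf_1 = A_\pm - (I - M^2),
\]
which, together with $I - M^2 \ge 0$, shows $A_\pm - M B_\pm M = I - M^2 \ge 0$. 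Conjugating this inequality by the self-adjoint operator $A_\pm^{-1/2}$ preserves positivity and yields $A_\pm^{-1/2}(M B_\pm M)A_\pm^{-1/2} \le I$, i.e.\ $\bigl\| A_\pm^{-1/2} M B_\pm^{1/2} \bigr\| \le 1$, as desired.

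I do not expect a genuine obstacle here: the substantive moves are recognizing that $\|X\|^2 = \|XX^*\|$ turns the desired bound into the order inequality $M B_\pm M \le A_\pm$, noticing the cancellation that identifies $A_\pm - M B_\pm M$ with $I - M^2$, and, for the second assertion, using the lower bound on $\sigma$ to guarantee that $A_-$ and $B_-$ are invertible (which is why that restriction appears). The monotonicity in $V$ packaged by this lemma is in the spirit of \cite{l-p, vasy}.
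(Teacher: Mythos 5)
Your proof is correct, and it takes a genuinely different route from the paper's. The paper first passes to the inverses: it uses the identity $\vhalf_j(I\pm \vhalf_j R_0(i\sigma)\vhalf_j)^{-1}\vhalf_j=(I\pm V_jR_0(i\sigma))^{-1}V_j$ together with the monotonicity result \cite[Lemma 2.2]{vasy} to obtain
$\vhalf_2(I\pm \vhalf_2R_0\vhalf_2)^{-1}\vhalf_2\ \geq\ \vhalf_1(I\pm \vhalf_1R_0\vhalf_1)^{-1}\vhalf_1$,
then sandwiches this between the multiplication operators $1/\vhalf_2$ and conjugates by $(I\pm \vhalf_2R_0\vhalf_2)^{1/2}$ so that the right-hand side becomes $Y^*Y$ for the operator $Y$ in question, bounded above by the characteristic function of $\supp V_2$. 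You instead work with the operators themselves rather than their inverses: the C*-identity $\|X\|^2=\|XX^*\|$ reduces the claim to $MB_\pm M\leq A_\pm$, and that order inequality falls out of the one-line cancellation $M\vhalf_2=\vhalf_1$ combined with $M^2\leq I$, after which conjugation by $A_\pm^{-1/2}$ finishes. Your version is more self-contained (no appeal to Vasy's operator-monotonicity lemma, whose content is essentially the monotonicity of the inverse map that your rearrangement sidesteps entirely), while the paper's version makes the link to the Lax--Phillips/Vasy monotonicity explicit and records the intermediate inequality for $(I\pm V R_0(i\sigma))^{-1}V$, which is the form of monotonicity those earlier works exploit. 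Both arguments use the same elementary inputs everywhere else: positivity of $R_0(i\sigma)=(-\Delta+\sigma^2)^{-1}$, the bound $\|R_0(i\sigma)\|\leq \sigma^{-2}$ to make $I-\vhalf_jR_0(i\sigma)\vhalf_j$ positive invertible when $\sigma\geq 2(\|V_2\|_\infty+1)$, and $0\leq \vhalf_1/\vhalf_2\leq 1$.
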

\begin{proof}
When $\sigma>0$ is sufficiently large that
$I\pm \vhalf_j R_0(i\sigma) \vhalf_j$ is a positive operator,
\begin{align*}
& (I\pm V_j R_0(i\sigma) )\vhalf_j (I\pm \vhalf_j R_0(i\sigma)\vhalf_j)^{-1}\\
& 
 = \vhalf_j (I\pm \vhalf_j R_0(i\sigma)\vhalf_j)
(I\pm \vhalf_j R_0(i\sigma)\vhalf_j)^{-1}
\\& = \vhalf_j.
\end{align*}
Thus
 $$\vhalf_j(I\pm \vhalf_j R_0(i\sigma) \vhalf_j)^{-1} 
\vhalf_j = (I \pm V_j R_0(i\sigma))^{-1}V_j,\; j=1,2 $$
for $\sigma>0 $ sufficiently large.
Applying \cite[Lemma 2.2]{vasy}, and using that $V_2\geq V_1$, we get
$$(I + V_2 R_0(i\sigma))^{-1}V_2\geq (I + V_1 R_0(i\sigma))^{-1}V_1.$$
When we take the ``$-$'' sign, again applying \cite[Lemma 2.2]{vasy},
$$(I - V_2 R_0(i\sigma))^{-1}V_2\geq (I - V_1 R_0(i\sigma))^{-1}V_1
$$
when $\sigma>2( \|V_2\|_\infty +1)$.   Here we note our convention differs
somewhat from \cite{vasy}, in that we take $V_j\geq 0$.   
Summarizing, 
$$\vhalf_2(I \pm \vhalf_2 R_0(i\sigma) \vhalf_2)^{-1} \vhalf_2
\geq \vhalf_1(I \pm \vhalf_1 R_0(i\sigma) \vhalf_1)^{-1} \vhalf_1$$
when $\sigma>0$ (for the ``$+$'' sign) or $\sigma>2(\|V\|+1)$ (for the 
``$-$'' sign).  For the remainder of the proof, we shall assume $\sigma>0$ 
satisfies these requirements and suppress the argument $i\sigma$.

Now let $\chi_{V_2}$ be the characteristic function of the support of 
$V_2$ and recall $\vhalf_1 \chi_{V_2}=\vhalf_1$ and note that 
$\chi_{V_2}(I\pm \vhalf_2 R_0\vhalf_2)= (I\pm \vhalf_2 R_0\vhalf_2)\chi_{V_2}$. 
 Then
$$\chi_{V_2} (I \pm \vhalf_2 R_0 \vhalf_2)^{-1} \chi_{V_2} 
\geq  \frac{ \vhalf_1}{\vhalf_2}(I \pm \vhalf_1 R_0 \vhalf_1)^{-1} \frac{\vhalf_1}
{\vhalf_2}.$$
This implies
$$\chi_{V_2} \geq (I \pm \vhalf_2 R_0 \vhalf_2)^{1/2}
\frac{ \vhalf_1}{\vhalf_2}(I \pm \vhalf_1 R_0 \vhalf_1)^{-1} \frac{\vhalf_1}
{\vhalf_2}(I \pm \vhalf_2 R_0 \vhalf_2)^{1/2}.$$
This proves the lemma, since the norm of the right hand side is the square of
the norm of the operator in question.
\end{proof}

\begin{lemma}\label{l:eigenvaluebd} Let ${\mathcal H}$ be an 
infinite dimensional complex separable Hilbert space,
 $A,\; B\in {\mathcal L }({\mathcal H})$, with $B=B^*$, 
and $\|A\| \leq 1$.  Let $|\lambda_1|\geq |\lambda_2|\geq...$ be the 
norms of the eigenvalues
of $A^*BA$, and $|\mu_1|\geq |\mu_2|\geq...$ be the norms of the 
eigenvalues of $B$. In both
cases we repeat according to multiplicity.
Then $|\mu_j|\geq |\lambda_j|$ for all $j$.
\end{lemma}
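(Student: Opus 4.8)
The plan is to read off the inequality from the min--max (Courant--Fischer) characterization of the eigenvalues of a compact self--adjoint operator, applied not to $B$ and $A^{*}BA$ directly but to their \emph{squares} $B^{2}$ and $(A^{*}BA)^{2}$; passing to squares is exactly what allows us to disregard the signs of the (possibly negative) eigenvalues. A couple of preliminary remarks: $A^{*}BA$ is self--adjoint because $B=B^{*}$, so its eigenvalues are real; we may assume $|\lambda_{j}|>0$, since otherwise there is nothing to prove; and we use that $B$, and hence $A^{*}BA$, is compact, which is the case in all our applications (there $B$ is in fact trace class).

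The first step is to produce a subspace of $\mathcal H$ of dimension at least $j$ on which $B$ is bounded below by $|\lambda_{j}|$. Let $\mathcal K$ be the range of the spectral projection of $A^{*}BA$ for the set $\{t\in\Real:|t|\ge|\lambda_{j}|\}$. Since this set contains, with multiplicity, at least the eigenvalues $\lambda_{1},\dots,\lambda_{j}$, and since $|\lambda_{j}|>0$ and $A^{*}BA$ is compact, $\mathcal K$ is finite dimensional with $\dim\mathcal K\ge j$, and $\|A^{*}BAx\|\ge|\lambda_{j}|\,\|x\|$ for every $x\in\mathcal K$. Because $\|A\|\le1$ we have $\|A^{*}y\|\le\|y\|$, so $\|BAx\|\ge\|A^{*}BAx\|\ge|\lambda_{j}|\,\|x\|\ge|\lambda_{j}|\,\|Ax\|$ for $x\in\mathcal K$; in particular $A$ is injective on $\mathcal K$, so $\mathcal L:=A\mathcal K$ has $\dim\mathcal L=\dim\mathcal K\ge j$, and $\|By\|\ge|\lambda_{j}|\,\|y\|$ for all $y\in\mathcal L$.

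The second step is the min--max argument. For every unit vector $y\in\mathcal L$ we have $\langle B^{2}y,y\rangle=\|By\|^{2}\ge|\lambda_{j}|^{2}$, so the $j$th of the quantities $\sup_{\dim L=j}\inf_{\|y\|=1,\,y\in L}\langle B^{2}y,y\rangle$ is at least $|\lambda_{j}|^{2}>0$; since $B^{2}$ is a compact nonnegative operator, the Courant--Fischer principle then forces $B^{2}$ to have at least $j$ eigenvalues, counted with multiplicity, that are $\ge|\lambda_{j}|^{2}$. As $\ker(B^{2}-\tau)=\ker(B-\sqrt\tau)\oplus\ker(B+\sqrt\tau)$ for $\tau>0$, this says precisely that $B$ has at least $j$ eigenvalues of modulus $\ge|\lambda_{j}|$, i.e.\ $|\mu_{j}|\ge|\lambda_{j}|$.

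The one genuine point to watch, as already indicated, is that one must argue with the norms $\|Bx\|$ and with $B^{2}$ rather than with the quadratic form $\langle Bx,x\rangle$, since the eigenvalues of $B$ need not all have the same sign; with that observation the rest is routine. I would also include the shorter alternative derivation: the $j$th approximation number satisfies $s_{j}(A^{*}BA)\le\|A^{*}\|\,s_{j}(B)\,\|A\|\le s_{j}(B)$, and for a compact self--adjoint operator the $j$th approximation number equals the $j$th largest modulus among its eigenvalues, which gives $|\lambda_{j}|\le|\mu_{j}|$ at once.
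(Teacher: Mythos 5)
Your proposal is correct, and in fact it contains the paper's proof verbatim as your closing ``shorter alternative'': the paper simply observes that since $B$ and $A^*BA$ are self-adjoint (and, implicitly, compact), the moduli of their eigenvalues coincide with their characteristic values, and then invokes the product bound $s_j(A^*BA)\leq \|A^*\|\,s_j(B)\,\|A\|\leq s_j(B)$ from \cite[Theorem 1.6]{simonti}. Your main argument is a genuinely different and more self-contained route: you build a subspace $\mathcal L=A\mathcal K$ of dimension at least $j$ on which $\|By\|\geq|\lambda_j|\|y\|$, then run Courant--Fischer on the nonnegative operator $B^2$ and translate back via $\ker(B^2-\tau)=\ker(B-\sqrt\tau)\oplus\ker(B+\sqrt\tau)$. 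The steps all check out (in particular the passage to $B^2$ correctly handles the possible mixture of signs among the eigenvalues of $B$, and the injectivity of $A$ on $\mathcal K$ is justified). What your longer argument buys is independence from the machinery of singular values, at the cost of length; what the paper's one-line argument buys is brevity at the cost of an external reference. One point worth making explicit either way: the lemma as stated only assumes $A,B\in\mathcal L(\mathcal H)$, but both proofs use compactness of $B$ (you flag this, and it holds in the application, where $B$ is trace class), so the hypothesis of compactness should really be understood as part of the statement.
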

\begin{proof}
One way to prove this it that by noting that since $B$ and $A^*BA$ are
self-adjoint, the norms of the 
the eigenvalues are  the characteristic values.  Then this 
lemma is an immediate application of the bound for the characteristic
values of a product found, for example, in \cite[Theorem 1.6]{simonti}.
\end{proof}

The next lemma shows that $F_{m,\pm V}(i\sigma)$ depends monotonically
on $V$ in some sense.
\begin{lemma}\label{l:mono} Let $V_1,\; V_2\in L^{\infty}(\Real^d)$ and 
suppose the support of $V_j$ is contained in $\overline{B}(R)$ for $j=1,\;2$. 
Suppose $V_2(x)\geq V_1(x)\geq 0$ for all $x\in \Real^d$.  Then 
$|F_{m,V_1}(i\sigma)|\leq |F_{m,V_2}(i\sigma)|$ for all $\sigma \in \Real_+$.  Moreover,
if $\sigma \geq 2(\|V_2\|_{\infty}+1)$, then 
$|F_{m,-V_1}(i\sigma)|\leq |F_{m,-V_2}(i\sigma)|$.
\end{lemma}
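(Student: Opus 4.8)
The plan is to reduce the inequality to a comparison of eigenvalue moduli of a self-adjoint trace-class operator, and then to combine Lemmas~\ref{l:normbd} and~\ref{l:eigenvaluebd}. First I would record the structure of $F_{m,\pm V}(i\sigma)$. Since $d$ is even and $\sigma>0$ is real, the integral kernel of $T(i\sigma)$ is $i^{d-2}$ times a real kernel that is symmetric in its two arguments (replace $x-y$ by $y-x$ and then $\omega$ by $-\omega$ in the defining integral over $\Sphere^{d-1}$); hence $V^{1/2}T(i\sigma)V^{1/2}=i^{d-2}S_V$ with $S_V=S_V^*$, and $S_V$ is trace class as recalled in Section~\ref{s:defF}. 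Under the standing hypotheses on $\sigma$ the operator $P_V\defeq I\pm V^{1/2}R_0(i\sigma)V^{1/2}$ is positive and self-adjoint, so $\tilde K_V\defeq P_V^{-1/2}S_VP_V^{-1/2}$ is self-adjoint and trace class. Using $i\cdot i^{d-2}=\pm i$ (valid because $d$ is even), formula (\ref{eq:Fmrewrite}) reads $F_{m,\pm V}(i\sigma)=\det(I+i\beta\tilde K_V)$ for some real number $\beta=\beta(d,m,\pm)$ with $|\beta|=|m|$. Enumerating the real eigenvalues of $\tilde K_V$ by decreasing modulus as $\kappa_1(V),\kappa_2(V),\dots$, repeated according to multiplicity, the product formula for $\det(I+\cdot)$ on trace-class operators gives
$$|F_{m,\pm V}(i\sigma)|=\prod_{j}\bigl|1+i\beta\kappa_j(V)\bigr|=\prod_{j}\sqrt{1+m^2\kappa_j(V)^2}.$$

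Next I would exploit $V_2\ge V_1\ge0$ to write $\tilde K_{V_1}$ as a conjugate of $\tilde K_{V_2}$ by a contraction. With the stated convention, the multiplication operator $W\defeq V_1^{1/2}/V_2^{1/2}$ is self-adjoint with $\|W\|\le1$ (since $V_1\le V_2$) and satisfies $WV_2^{1/2}=V_1^{1/2}$, whence $S_{V_1}=WS_{V_2}W$. Substituting this and inserting $P_{V_2}^{1/2}P_{V_2}^{-1/2}$ on either side of $S_{V_2}$ yields
$$\tilde K_{V_1}=A^*\tilde K_{V_2}A,\qquad A=P_{V_2}^{1/2}\,W\,P_{V_1}^{-1/2},\quad A^*=P_{V_1}^{-1/2}\,W\,P_{V_2}^{1/2}.$$
But $A^*$ is precisely the operator estimated in Lemma~\ref{l:normbd}, so $\|A\|=\|A^*\|\le1$ --- for the ``$+$'' sign whenever $\sigma>0$, and for the ``$-$'' sign whenever $\sigma\ge2(\|V_2\|_\infty+1)$ (which also ensures $P_{V_1}>0$, since $\|V_1\|_\infty\le\|V_2\|_\infty$). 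Applying Lemma~\ref{l:eigenvaluebd} with $B=\tilde K_{V_2}=\tilde K_{V_2}^*$ then gives $|\kappa_j(V_1)|\le|\kappa_j(V_2)|$ for every $j$. Since $t\mapsto\sqrt{1+m^2t^2}$ is nondecreasing in $|t|$, the product formula above yields $|F_{m,V_1}(i\sigma)|\le|F_{m,V_2}(i\sigma)|$ for all $\sigma>0$, and $|F_{m,-V_1}(i\sigma)|\le|F_{m,-V_2}(i\sigma)|$ once $\sigma\ge2(\|V_2\|_\infty+1)$.

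The main work is in the first step: one must check carefully that $V^{1/2}T(i\sigma)V^{1/2}$ really is a fixed real scalar times a self-adjoint trace-class operator and keep track of the resulting power of $i$, and one must justify the functional calculus (square roots and inverses of $P_V$, the identity $S_{V_1}=WS_{V_2}W$, and the determinant product formula) under the stated constraints on $\sigma$. The one conceptual point, as opposed to routine computation, is to arrange the algebra so that the contraction $A$ produced by writing $\tilde K_{V_1}=A^*\tilde K_{V_2}A$ is exactly the operator whose norm is bounded in Lemma~\ref{l:normbd}; granting that, Lemmas~\ref{l:normbd} and~\ref{l:eigenvaluebd} finish the proof with no further estimates.
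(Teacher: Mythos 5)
Your proposal is correct and follows essentially the same route as the paper: rewrite $|F_{m,\pm V}(i\sigma)|$ as $\prod_j\sqrt{1+m^2\lambda_j^2}$ using the self-adjointness of the conjugated operator $B_{1,\pm,V}(i\sigma)$ from (\ref{eq:B1}), express $B_{1,\pm,V_1}$ as $A^*B_{1,\pm,V_2}A$ with $A$ exactly the contraction of Lemma \ref{l:normbd}, and conclude with Lemma \ref{l:eigenvaluebd}. Your explicit verification that $V^{1/2}T(i\sigma)V^{1/2}=i^{d-2}S_V$ with $S_V$ self-adjoint is a welcome detail that the paper leaves implicit.
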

\begin{proof}
For any compactly supported $V\geq 0$, set
\begin{equation}\label{eq:B1}
B_{1,\pm, V}(i\sigma ) = (I\pm \vhalf R_0(i\sigma)\vhalf)^{-1/2} \vhalf
 T(i\sigma) \vhalf (I\pm \vhalf R_0(i\sigma)\vhalf)^{-1/2}
\end{equation}
and notice that if $\sigma >0$ (for the ``$+$'' sign) or $\sigma > 2(\| V\|_{\infty} +1)$ (for the ``$-$'' sign), $B_{1,\pm V}(i\sigma )$ is a self-adjoint 
trace class
operator.   Comparing (\ref{eq:Fmrewrite}), we see
that 
$$F_{m,\pm V}(i\sigma)= \det(I \pm i m B_{1,\pm, V}(i\sigma)).$$
Hence for sufficiently large $\sigma$
\begin{align}\label{eq:detprod}
|F_{m,\pm V}(i\sigma)| &  =  \left|\prod (I+ im \lambda_j(B_{1,\pm, V}(i\sigma))) \right| \nonumber 
 \\ & = \prod \left|(I+ im \lambda_j(B_{1,\pm, V}(i\sigma)))\right|
\nonumber \\ &  = \prod 
\sqrt{1+ m^2  \lambda_j^2(B_{1,\pm, V}(i\sigma))}
\end{align}
 where $\lambda_j(B_{1,\pm, V})$ are the nonzero eigenvalues of $B_{1,\pm, V}$,
repeated according to multiplicity and 
arranged in decreasing order of magnitude: $|\lambda_1(B_{1,\pm, V})|
\geq |\lambda_2(B_{1,\pm, V})| \geq ...$.

Now we turn to $V_1$ and $V_2$, and $\sigma$ as in the statement of the lemma.  
Note that
\begin{multline*}
B_{1,\pm, V_1}(i\sigma)=  
(I \pm \vhalf_1 R_0(i\sigma) \vhalf_1)^{-1/2} \frac{\vhalf_1}{\vhalf_2}
(I \pm \vhalf_2 R_0(i\sigma) \vhalf_2)^{1/2}  B_{1, \pm , V_2}(i\sigma)  \\
\times 
(I \pm \vhalf_2 R_0(i\sigma) \vhalf_2)^{1/2}\frac{\vhalf_1}{\vhalf_2}(I \pm \vhalf_1 R_0(i\sigma) \vhalf_1)^{-1/2}.
\end{multline*}
Again we use the convention that $\vhalf_1/\vhalf_2$ is $0$ outside the
support of $V_1$.  The lemma now follows from (\ref{eq:detprod}) and 
Lemmas \ref{l:normbd} and \ref{l:eigenvaluebd}.
\end{proof}

In order to obtain the lower bounds of Proposition \ref{p:lb}, we shall
need a special case of that proposition, in which the potential is 
of the form $V(x)=\epsilon\chi_B(x)$, and $\chi_B(x)$ is the 
characteristic function of a ball centered 
at the origin.  
To study such a special case, we will introduce
spherical coordinates in $\Real^d$ (polar coordinates in the case $d=2$).  

In spherical coordinates,
$$-\Delta = -\frac{\partial^2}{\partial r^2}-\frac{d-1}{r}\frac{\partial }{\partial r} +\frac{1}{r^2}\Delta_{\Sphere^{d-1}}.$$
The eigenvalues of of the Laplacian on 
$\Sphere^{d-1}$, $\Delta_{\Sphere^{d-1}}$,
 are $l(l+d-2)$, $l\in \Natural_0$ with  multiplicity 
$$\mu(l)=\frac{2l +d-2}{d-2}\left( \begin{array}{c} l+d-3\\ 
d-3 \end{array}\right) = \frac{2l^{d-2}}{(d-2)!}(1+O(l^{-1})).$$
Denote by $Y_l^\mu$, $1\leq \mu \leq \mu(l)$, $l=0,1,2,...$ a complete
 orthonormal set of spherical 
harmonics on $\Sphere^{d-1}$ with eigenvalue $l(l+d-2)$.

We denote by $P_l$ projection onto the span of 
$$\{ h(|x|)Y^\mu_l(x/|x|): \; 1\leq \mu \leq \mu(l),\; h(|x|)\in L^2(\Real^d; r^{d-1}dr)\}.$$
Thus writing $x=r\theta$, with $r>0$ and $\theta \in \Sphere^{d-1}$
\begin{equation}
\label{eq:Pl}
(P_l g)(r\theta)=\sum_{\mu =1}^{\mu(l)} \int_{\Sphere^{d-1}} g(r\omega) 
Y^{\mu}_l(\theta)\overline{Y}^\mu_l(\omega ) d S_{\omega}.
\end{equation}

\begin{lemma}\label{l:B_1approx} Let $V\geq 0$, 
$V\in L^{\infty}_{\comp}(\Real^d)$ 
be a radial function, so that $V(x)=f(|x|)$
for some function $f\in L^{\infty}_{\comp}([0,\infty))$.  
Then for $\sigma>0$ sufficiently large, with $B_1=B_{1,\pm, V}$ 
the operator defined in (\ref{eq:B1}),
$$\left\| \large(\vhalf T(i\sigma)\vhalf -B_{1,\pm ,V}(i\sigma)\large)
P_l \right\| \leq \frac{C}{\sigma^2}\| \vhalf T(i\sigma)
\vhalf P_l\|$$
where $C$ depends on $V$ but not $\sigma$ or $l$.
\end{lemma}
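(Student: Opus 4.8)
The plan is to exploit the explicit structure of $B_{1,\pm,V}(i\sigma)$ as a perturbation of $\vhalf T(i\sigma)\vhalf$ by a factor that is small in operator norm. Recall from (\ref{eq:B1}) that
$$B_{1,\pm,V}(i\sigma) = (I\pm \vhalf R_0(i\sigma)\vhalf)^{-1/2} \vhalf T(i\sigma)\vhalf (I\pm \vhalf R_0(i\sigma)\vhalf)^{-1/2},$$
so that, writing $S_\pm = (I\pm \vhalf R_0(i\sigma)\vhalf)^{-1/2}$, we have
$$\vhalf T(i\sigma)\vhalf - B_{1,\pm,V}(i\sigma) = \vhalf T(i\sigma)\vhalf - S_\pm \vhalf T(i\sigma)\vhalf S_\pm.$$
Adding and subtracting $S_\pm \vhalf T(i\sigma)\vhalf$, this equals $(I-S_\pm)\vhalf T(i\sigma)\vhalf + S_\pm \vhalf T(i\sigma)\vhalf (I-S_\pm)$. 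First I would note that since $\|R_0(i\sigma)\|\le 1/\sigma^2$ and $V$ is bounded, for $\sigma$ large $\vhalf R_0(i\sigma)\vhalf$ has norm $\le \|V\|_\infty/\sigma^2$, so a Neumann-series estimate gives $\|I-S_\pm\| \le C/\sigma^2$ and $\|S_\pm\|\le 2$ for $\sigma$ large. This already yields $\|\vhalf T(i\sigma)\vhalf - B_{1,\pm,V}(i\sigma)\| \le (C/\sigma^2)\|\vhalf T(i\sigma)\vhalf\|$; the point of the lemma is to get this bound uniformly after composing on the right with $P_l$, with $\|\vhalf T(i\sigma)\vhalf P_l\|$ in place of $\|\vhalf T(i\sigma)\vhalf\|$.

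The key step is therefore to commute $P_l$ through the factors. Here is where radiality of $V$ is used: since $V(x)=f(|x|)$ is radial, $\vhalf$ is multiplication by a radial function, so $\vhalf$ commutes with $P_l$; likewise $R_0(i\sigma)$ is a convolution operator, hence rotation-invariant, so it commutes with $P_l$, and therefore so do $\vhalf R_0(i\sigma)\vhalf$ and $S_\pm$. Consequently
$$\bigl(\vhalf T(i\sigma)\vhalf - B_{1,\pm,V}(i\sigma)\bigr)P_l = (I-S_\pm)\,\vhalf T(i\sigma)\vhalf P_l + S_\pm\,\vhalf T(i\sigma)\vhalf \,(I-S_\pm)P_l = (I-S_\pm)\,\vhalf T(i\sigma)\vhalf P_l + S_\pm\,(I-S_\pm)\,\vhalf T(i\sigma)\vhalf P_l,$$
using $P_l(I-S_\pm) = (I-S_\pm)P_l$ in the last term. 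Taking norms and applying $\|I-S_\pm\|\le C/\sigma^2$, $\|S_\pm\|\le 2$ gives
$$\left\| \bigl(\vhalf T(i\sigma)\vhalf - B_{1,\pm,V}(i\sigma)\bigr)P_l \right\| \le \tfrac{C}{\sigma^2}\|\vhalf T(i\sigma)\vhalf P_l\| + \tfrac{2C}{\sigma^2}\|\vhalf T(i\sigma)\vhalf P_l\| \le \tfrac{C'}{\sigma^2}\|\vhalf T(i\sigma)\vhalf P_l\|,$$
which is the claimed estimate.

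The only genuine obstacle is making the Neumann-series bound $\|I-S_\pm\|\le C/\sigma^2$ honest, i.e. controlling $(I\pm X)^{-1/2}-I$ in operator norm by $C\|X\|$ when $\|X\|$ is small. For the ``$+$'' sign this is immediate from the binomial series for $(1+t)^{-1/2}$, valid and absolutely convergent for $\|X\|<1$, giving $\|(I+X)^{-1/2}-I\|\le \sum_{k\ge 1}\binom{-1/2}{k}(-1)^k\|X\|^k \le C\|X\|$ for $\|X\|\le 1/2$; for the ``$-$'' sign one uses the same expansion in $-X$, valid because the hypothesis $\sigma\ge 2(\|V\|_\infty+1)$ (as in the standing assumptions of this section) forces $\|X\|=\|\vhalf R_0(i\sigma)\vhalf\|\le \|V\|_\infty/\sigma^2 < 1/2$, so that $I-X$ is positive and its inverse square root is given by the convergent series. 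In either case $\|X\|\le \|V\|_\infty/\sigma^2$, so $\|I-S_\pm\|\le C\|V\|_\infty/\sigma^2$, and the constant $C$ depends only on $\|V\|_\infty$, hence on $V$ and not on $\sigma$ or $l$. With these ingredients in place the proof is just the commutation identity above followed by the triangle inequality.
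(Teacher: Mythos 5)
Your argument is essentially the paper's proof: the same telescoping decomposition of $\vhalf T\vhalf - S_\pm \vhalf T\vhalf S_\pm$ into terms each carrying a factor $I-S_\pm$, the same use of radiality to commute $P_l$ with $\vhalf$, $R_0(i\sigma)$, and hence with $S_\pm$, and the same $O(1/\sigma^2)$ bound on $\|I-S_\pm\|$ (which the paper merely asserts and you justify via the binomial series). One small slip: the displayed identity $S_\pm\,\vhalf T\vhalf\,(I-S_\pm)P_l = S_\pm(I-S_\pm)\,\vhalf T\vhalf P_l$ is false, since $I-S_\pm$ commutes with $P_l$ but not with $\vhalf T\vhalf$; the correct rearrangement is $S_\pm\,\vhalf T\vhalf P_l\,(I-S_\pm)$, which yields exactly the same norm bound, so the estimate is unaffected.
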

\begin{proof}
To simplify the notation,
 we write $A(i\sigma)=A_{\pm, V}(i\sigma)=I\pm \vhalf R_0(i\sigma)\vhalf$, and
note that for $\sigma>0$ sufficiently large, 
\begin{equation}\label{eq:Abd}
\|A^{-1}(i\sigma)-I\|=O(1/\sigma^2),\; \|A^{-1/2}(i\sigma)-I\|=O(1/\sigma^2).
\end{equation} 
Now with $B_1$ the operator defined in (\ref{eq:B1}),
\begin{align}\label{eq:diffB1}
B_1-\vhalf  T \vhalf & = (A^{-1/2}-I)\vhalf T  
\vhalf A^{-1/2}  
+ \vhalf T \vhalf (A^{-1/2}-I).
\end{align}
Because $V$ is radial, multiplication
by either $V$ or $\vhalf$ commutes with $P_l$.  Since
$R_0$ commutes with $P_l$, so do 
$A$, $A^{-1}$, and $A^{-1/2}$. 
Thus
\begin{multline*}
\| (B_1-\vhalf  T \vhalf)P_l\|  \\ \leq 
\|(A^{-1/2}-I)\| \| \vhalf T  
\vhalf P_l\| \| A^{-1/2}\|  
+ \|\vhalf T  \vhalf P_l\| \| (A^{-1/2}-I) \|.
\end{multline*}
 Thus using (\ref{eq:Abd}) we are done.
\end{proof}

 Using the notation of \cite{ab-st}, let $J_\nu$ and $Y_\nu$ denote the Bessel
 functions of the first and second
kinds, respectively, and recall that $H^{(1)}_\nu(z)= J_\nu(z)+iY_{\nu(z)}$. 
For $l\in \Natural_0$, set $\nu_l= l+(d-2)/2$ and notice that 
$\nu_l$ is an integer since $d$ is even.  We can now expand $R_0(\lambda)$
using spherical harmonics.   When 
$0<\arg \lambda<\pi$ and $g\in L^2(\Real^d)$, 
\begin{equation}\label{eq:R0exp}
(R_0(\lambda)g)(r\theta)= 
\sum_{l=0}^{\infty} \sum_{\mu=1}^{\mu(l)}
\int_0^\infty  \int_{\Sphere^{d-1}} G_{\nu_l}(r,r';\lambda) Y^{\mu}_l( \theta)
\overline{Y}^\mu_l(\omega) g(r'\omega) (r')^{d-1}dS_\omega dr'
\end{equation}
with 
\begin{equation}\label{eq:Gnu}G_{\nu_l}(r,r';\lambda)=
\left\{ \begin{array}{ll}\frac{\pi}{2i}(r r')^{-(d-2)/2}
J_{\nu_l}(\lambda r)H_{\nu_l}^{(1)}(\lambda r'),\; & \text{if } 
r<r'\\\frac{\pi}{2i}(rr')^{-(d-2)/2}
H_{\nu_l}^{(1)}(\lambda r)J_{\nu_l}(\lambda r'), & \text{if}\; r\geq r'
\end{array}
\right.
\end{equation}
As noted earlier, for compactly supported, bounded $\chi$,
$\chi R_0(\lambda)\chi$ has an analytic continuation to $\Lambda$, and 
$G_{\nu_l}(r,r';\lambda)$ does as well.

Now we use \cite[9.1.35, 9.1.36]{ab-st} to obtain
$$J_\nu(e^{i\pi}z)= e^{i\pi \nu}J_{\nu}(z).$$
Specializing \cite[9.1.36]{ab-st}  to the case of $\nu$ an integer
we have
$$Y_{\nu_l}(e^{i\pi}z)= e^{-\nu_l \pi i}(Y_{\nu_l}(z)+2i J_{\nu_l}(z))$$
giving
$$H_{\nu_l}^{(1)}(e^{i\pi }z)= e^{i\nu_l \pi}(-J_{\nu_l} (z)+i Y_{\nu_l}(z)).$$
Thus 
\begin{equation}\label{eq:gnudiff}
\tilde{G}_{\nu_l}(r,r';\lambda)\defeq
G_{\nu_l}(r,r';e^{i\pi} \lambda)-G_{\nu_l}(r,r';\lambda)
= i\pi (rr')^{-(d-2)/2}J_{\nu_l}(\lambda r)J_{\nu_l}(\lambda r').
\end{equation}
Together, (\ref{eq:R0exp}) and (\ref{eq:gnudiff}) give us an expression for
the Schwartz kernel of
 $R_0(e^{i\pi}\lambda)-R_0(\lambda)$ in spherical coordinates: with 
$r, \; r'>0$, $\theta \in \Sphere^{d-1}$, 
\begin{multline}\label{eq:shexp12}
\left((R_0(e^{i\pi \lambda})-R_0(\lambda))g\right)(r\theta)  \\=
\sum_{l=0}^{\infty} \sum_{\mu =1}^{\mu(l)}
\int_0^\infty  \int_{\Sphere^{d-1}} \tilde{G}_{\nu_l}(r,r';\lambda) Y^{\mu}_l( \theta)
\overline{Y}^\mu_l(\omega) g(r'\omega) (r')^{d-1}dS_\omega dr'.
\end{multline}

We continue to denote by $P_l$ the operator given in (\ref{eq:Pl}).
\begin{lemma}\label{l:B1Pl} Let $B_1$ be the operator defined
in (\ref{eq:B1}).
Let $V_0=\epsilon \chi_a$, where $\epsilon,\; a>0$
and  $\chi_a$ is the characteristic function
of the ball of radius $a$ and center $0$. Fix a constant $M>3$.
 Then there is a constant $c>0$
independent of $\sigma$
so that 
$$\| B_{1,\pm ,V_0}(i\sigma)P_l\| \geq c \frac{e^{c\nu_l}}{\nu_l}$$
for all $l\in \Natural$ which satisfy $a\sigma/6>\nu_l> a \sigma /M$
for all sufficiently large $\sigma>0$.  
\end{lemma}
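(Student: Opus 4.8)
The strategy is to reduce the estimate on $\|B_{1,\pm,V_0}(i\sigma)P_l\|$ to an explicit estimate on $\|\vhalf T(i\sigma)\vhalf P_l\|$ via Lemma~\ref{l:B_1approx}, and then to compute the latter quantity essentially exactly using the spherical harmonic decomposition~(\ref{eq:shexp12}) of $T(i\sigma) = R_0(e^{i\pi}\lambda)-R_0(\lambda)$ at $\lambda = i\sigma$. Since $V_0 = \epsilon\chi_a$ is radial, Lemma~\ref{l:B_1approx} gives $\|(\vhalf T\vhalf - B_1)P_l\| \leq (C/\sigma^2)\|\vhalf T\vhalf P_l\|$, so for $\sigma$ large, $\|B_1 P_l\| \geq \tfrac12 \|\vhalf T(i\sigma)\vhalf P_l\|$. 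Now because $\vhalf$ here is just $\sqrt{\epsilon}$ times the (radial) cutoff to the ball of radius $a$, and since $T(i\sigma)$ commutes with $P_l$, the operator $\vhalf T(i\sigma)\vhalf P_l$ acts within the $l$th spherical-harmonic block as the integral operator on $L^2([0,a]; r^{d-1}dr)$ with kernel $\epsilon\, \tilde G_{\nu_l}(r,r';i\sigma)$, where by~(\ref{eq:gnudiff})
$$\tilde G_{\nu_l}(r,r';i\sigma) = i\pi (rr')^{-(d-2)/2} J_{\nu_l}(i\sigma r)J_{\nu_l}(i\sigma r').$$
This is a rank-one operator (on each copy of the $\mu(l)$-dimensional eigenspace), so its norm is exactly the $L^2([0,a];r^{d-1}dr)$-norm squared of the function $r \mapsto r^{-(d-2)/2}J_{\nu_l}(i\sigma r)$, times $\pi\epsilon$. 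Thus the whole problem comes down to bounding
$$\|\vhalf T(i\sigma)\vhalf P_l\| = \pi\epsilon \int_0^a |J_{\nu_l}(i\sigma r)|^2 r\, dr$$
from below.

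\textbf{The Bessel-function estimate.} The core of the argument is therefore the lower bound
$$\int_0^a |J_{\nu_l}(i\sigma r)|^2\, r\, dr \geq c\,\frac{e^{2c\nu_l}}{\nu_l^2}$$
in the regime $a\sigma/M < \nu_l < a\sigma/6$. Write $J_{\nu}(i\sigma r) = e^{i\pi\nu/2} I_\nu(\sigma r)$ (up to the standard phase), so $|J_{\nu_l}(i\sigma r)| = I_{\nu_l}(\sigma r)$, a positive increasing function of $r$. It suffices to bound the integrand from below near $r = a$, say on $r \in [a/2, a]$. There, $\sigma r \in [\sigma a/2, \sigma a]$ while $\nu_l < \sigma a/6$, so $\sigma r > 3\nu_l$; in this range the argument exceeds the order by a fixed factor, and the uniform asymptotics (Debye expansion, \cite{ab-st} 9.7.7, or the cruder bound $I_\nu(x) \geq c\,x^{-1/2}e^{x-\nu\,\mathrm{arccosh}(x/\nu)}\cdots$ — more simply, the integral representation $I_\nu(x) = \tfrac{1}{\pi}\int_0^\pi e^{x\cos\theta}\cos(\nu\theta)\,d\theta + \cdots$) give $I_{\nu_l}(\sigma r) \geq c\,\nu_l^{-1/2}e^{\eta\nu_l}$ for some fixed $\eta > 0$ depending only on the ratio $\sigma r/\nu_l$, which stays bounded below by $3$ on this range. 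Integrating over $r\in[a/2,a]$ (a set of fixed positive measure) yields $\int_0^a I_{\nu_l}(\sigma r)^2 r\,dr \geq c\,\nu_l^{-1}e^{2\eta\nu_l}$, and combining with the reduction above and relabelling the constant $c$ gives $\|B_{1,\pm,V_0}(i\sigma)P_l\| \geq c\,e^{c\nu_l}/\nu_l$. The upper bound $\nu_l < a\sigma/6$ is what guarantees $\sigma r$ stays a fixed multiple above $\nu_l$ on $[a/2,a]$; the lower bound $\nu_l > a\sigma/M$ is not really needed for \emph{this} inequality but fixes the regime in which the companion upper-bound estimates (used elsewhere in the proof of Proposition~\ref{p:lb}) are valid, so I would simply carry it along.

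\textbf{Main obstacle.} The only delicate point is extracting the \emph{exponential} lower bound on $I_{\nu_l}(\sigma r)$ with an explicit positive rate $\eta$ uniform over $l,\sigma$ in the stated regime; one must be careful that the constant $c$ does not secretly degrade as $\nu_l \to \infty$. I would handle this by using the uniform asymptotic expansion for $I_\nu(\nu z)$ with $z = \sigma r/\nu_l$ bounded in a compact subinterval of $(3,\infty)$ on the range $r\in[a/2,a]$: there $I_\nu(\nu z) \sim \tfrac{e^{\nu\eta(z)}}{\sqrt{2\pi\nu}(1+z^2)^{1/4}}$ with $\eta(z) = \sqrt{1+z^2} + \log\!\big(z/(1+\sqrt{1+z^2})\big) > 0$ for $z > 0$, and $\eta$ is bounded below by a positive constant on our compact $z$-interval. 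This gives exactly the claimed form with $c$ depending only on $a$, $\epsilon$, $M$, and the dimension $d$. The rest — the reduction via Lemma~\ref{l:B_1approx}, the rank-one identification, and the change of variables $\lambda = i\sigma$ in~(\ref{eq:gnudiff}) — is routine bookkeeping with the formulas already assembled in the excerpt.
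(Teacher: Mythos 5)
Your argument is correct and follows essentially the same route as the paper: reduce to a lower bound on $\|\vhalf_0 T(i\sigma)\vhalf_0 P_l\|$ via Lemma \ref{l:B_1approx}, evaluate that norm as $\pi\epsilon\int_0^a|J_{\nu_l}(i\sigma r)|^2\,r\,dr$ (the paper obtains this as a Rayleigh quotient with the test function $r^{-(d-2)/2}J_{\nu_l}(i\sigma r)Y^\mu_l(\theta)$, which for the rank-one block you identify is the same computation), and then bound the integral from below on $[a/2,a]$ using the uniform asymptotics of $I_\nu(\nu s)$. One small correction: the paper does use the lower bound $\nu_l>a\sigma/M$ in this very proof, since it keeps $s=\sigma r/\nu_l$ in the compact interval $[3,M]$ on which the estimate $|I_\nu(\nu s)|\geq c\,e^{c\nu}/\sqrt{\nu}$ from \cite[9.7.7]{ab-st} holds with a single constant $c$.
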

Before beginning the proof, we note that the constant $c$ does 
depend on $\epsilon$ and on $a$.
\begin{proof}
From Lemma \ref{l:B_1approx} it suffices to prove an analogous lower bound
for $\| \vhalf_0 T (i\sigma)\vhalf_0 P_l\|.$

Recall
 $iT(i\sigma)= R_0(e^{i\pi }i\sigma)-R_0(i\sigma)$.
Set 
$$\psi_l(r\theta)= \chi_a(r\theta)Y^\mu_l(\theta)r^{-(d-2)/2} J_{\nu_l}(i\sigma r)$$
for any $\mu\in \{ 1,...,\mu(l)\},$
 and
note that 
$$\| \vhalf_0 T \vhalf_0 P_l\| 
\geq  \frac{\left| \langle \vhalf_0 T \vhalf_0 \psi_l, \psi_l \rangle
\right| }{\|\psi_l\|^2} .$$
By (\ref{eq:gnudiff})  and (\ref{eq:shexp12}),
\begin{align}\label{eq:lb1}
\frac{|\langle \vhalf_0 T \vhalf_0 \psi_l, \psi_l \rangle |}
{\|\psi_l\|^2} & 
= \frac{ \pi \left( \int_0^a \epsilon^{1/2} |J_{\nu_l}(i\sigma r)|^2 r^{-(d-2)}r^{d-1}dr\right)^{2}}
{ \int_0^a  |J_{\nu_l}(i\sigma r)|^2 r^{-(d-2)}r^{d-1}dr}  \nonumber \\
& = \pi \epsilon \int_0^a  |J_{\nu_l}(i\sigma r)|^2 r dr \nonumber \\
& \geq \pi \epsilon \int_{a/2}^a |J_{\nu_l}(i\sigma r)|^2 r dr.
\end{align}

As in \cite[9.6.3]{ab-st}, setting 
$$I_\nu(z)\defeq e^{-\nu \pi i/2}J_{\nu}(z e^{i\pi/2}),\; -\pi <\arg z \leq \pi/2,$$
from \cite[9.7.7]{ab-st}  there is a constant $c>0$ 
so that for $\nu$ sufficiently large
$$|I_{\nu}(\nu s)| \geq c \frac{e^{c\nu}}{\sqrt{\nu}},\; 3\leq s \leq M.$$
Here and below we denote by $c$ a positive constant, independent of
$\nu$,  $l$, and 
$\sigma$, 
which may change from line to line.  Now we use that 
$| J_{\nu_l}(i\sigma z)|= | I_{\nu_l}(\sigma z)|$ and 
apply these to (\ref{eq:lb1}).  We find that if $3\leq \sigma r/\nu_l\leq M$
for all $r$ with $a/2\leq r \leq a$, then
$$ \left| \langle \vhalf_0 T \vhalf_0 \psi_l, \psi_l \rangle \right|
\geq 
 c \int_{a/2}^a \frac{e^{2\nu_l c}}{\nu_l} dr \geq c \frac{e^{2\nu_l c}}{\nu_l}$$
for all sufficiently large $\sigma$.  Thus, this holds for $l$ satisfying
 $a\sigma/6>\nu_l> a \sigma /M$ if $\sigma $ is sufficiently large, providing
a lower bound on $\| \vhalf_0 T \vhalf_0 P_l\|$, and thus on 
$\|B_{1,\pm V_0}(i\sigma)P_l\|$.
\end{proof}

\begin{lemma}\label{l:spcaselb}
Let $V_0=\epsilon \chi_a$, where $\epsilon, \; a>0$
and  $\chi_a$ is the characteristic function
of the ball of radius $a$ and center $0$.  Then for 
$m_0\not =0$, $m_0\in \Integers$,
 there is a $c>0$ so that for $\sigma>0$ sufficiently large
$$F_{m_0,\pm V_0}(i\sigma)\geq c\exp(c \sigma^d).$$
The constant $c$ depends on $a,\;\epsilon$, and $m_0$.
\end{lemma}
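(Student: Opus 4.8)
The plan is to reduce the lower bound on $F_{m_0,\pm V_0}(i\sigma)$ to counting how many indices $l$ contribute a large eigenvalue of the operator $B_{1,\pm,V_0}(i\sigma)$, and then to invoke the product formula \eqref{eq:detprod}. First, recall from \eqref{eq:detprod} that
\[
|F_{m_0,\pm V_0}(i\sigma)| = \prod_j \sqrt{1 + m_0^2\, \lambda_j^2(B_{1,\pm,V_0}(i\sigma))} \geq \prod_j \left(1 + |m_0|\,|\lambda_j(B_{1,\pm,V_0}(i\sigma))|\right)^{1/2},
\]
so it suffices to produce many eigenvalues of $B_{1,\pm,V_0}(i\sigma)$ that are bounded below. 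Since $V_0$ is radial, $B_{1,\pm,V_0}(i\sigma)$ commutes with each $P_l$, so its spectrum decomposes over $l$; Lemma \ref{l:B1Pl} tells us that for every $l$ with $a\sigma/6 > \nu_l > a\sigma/M$ (fixing, say, $M = 4$), the block $B_{1,\pm,V_0}(i\sigma)P_l$ has operator norm at least $c\, e^{c\nu_l}/\nu_l$, hence $B_{1,\pm,V_0}(i\sigma)$ has an eigenvalue of modulus at least $c\, e^{c\nu_l}/\nu_l$ in the range of $P_l$. Because $\nu_l = l + (d-2)/2$ runs through consecutive integers, the number of admissible $l$ is comparable to $a\sigma(1/6 - 1/M) \sim c_1\sigma$, and each such eigenvalue appears with multiplicity at least $\mu(l) \geq c_2 l^{d-2} \geq c_3 \sigma^{d-2}$ (valid once $\sigma$ is large, since then $l \sim a\sigma/M$ is large).

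Putting these together, and restricting in the product to the eigenvalues coming from the middle of the admissible window, say those $l$ with $a\sigma/5 > \nu_l > a\sigma/M$ so that $\nu_l \geq c_4 \sigma$, we get
\[
|F_{m_0,\pm V_0}(i\sigma)| \geq \prod_{a\sigma/5 > \nu_l > a\sigma/M} \left(1 + |m_0|\, c\, \frac{e^{c\nu_l}}{\nu_l}\right)^{\mu(l)/2} \geq \exp\left( \sum_{a\sigma/5 > \nu_l > a\sigma/M} \frac{\mu(l)}{2}\, \log\!\left(c\, \frac{e^{c\nu_l}}{\nu_l}\right)\right).
\]
For $\sigma$ large each summand's logarithm is at least $c\nu_l/2 \geq c_5\sigma$ (the $\log(1/\nu_l)$ and $\log c$ terms are lower order and absorbed), while $\mu(l) \geq c_3\sigma^{d-2}$ and the number of terms is at least $c_6\sigma$. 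Hence the exponent is at least $c_6\sigma \cdot c_3\sigma^{d-2} \cdot c_5\sigma = c_0\sigma^d$, which gives $|F_{m_0,\pm V_0}(i\sigma)| \geq \exp(c_0\sigma^d)$, and since $F$ here is a determinant of $I$ plus a trace-class operator it is in particular nonzero and we may absorb constants to write $F_{m_0,\pm V_0}(i\sigma) \geq c_0\exp(c_0\sigma^d)$ after adjusting $c_0$; strictly, we keep the modulus, as the statement should be read with $|F|$ or one notes $F_{m_0,\pm V_0}(i\sigma)$ is real and positive here because $B_1$ is self-adjoint and $\det(I + im_0 B_1)\overline{\det(I+im_0B_1)} = \det(I + m_0^2 B_1^2) > 0$... but the clean route is simply the bound on $|F|$.

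The main obstacle I expect is bookkeeping rather than anything deep: one must be careful that the constant $c$ from Lemma \ref{l:B1Pl} is the \emph{same} for all $l$ in the window and does not degrade, that "sufficiently large $\sigma$" can be chosen uniformly, and that taking a norm of a block $B_1 P_l$ really does furnish an honest eigenvalue of $B_1$ of comparable size (this uses self-adjointness of $B_1$, so $\|B_1 P_l\| = \|P_l B_1 P_l\|$ is the largest eigenvalue modulus of the self-adjoint compression, which is an eigenvalue of $B_1$ on $\operatorname{ran} P_l$ since $B_1$ preserves that subspace). A secondary subtlety is verifying the hypothesis of Lemma \ref{l:B1Pl} — that the admissible window $a\sigma/M < \nu_l < a\sigma/6$ is nonempty and contains $\gtrsim \sigma$ integers — which is immediate once $M > 6$ and $\sigma$ is large, so I would simply fix $M$ at the outset. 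No single estimate is hard; the content is the multiplicity factor $\mu(l) \sim \sigma^{d-2}$ times the count $\sim \sigma$ of sheets times the exponential rate $\sim \sigma$, yielding the sharp power $\sigma^d$.
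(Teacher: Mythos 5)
Your argument is essentially the paper's own proof: both rest on the eigenvalue product formula (\ref{eq:detprod}), the fact that the radial $V_0$ makes $B_{1,\pm,V_0}(i\sigma)$ commute with each $P_l$ so that $\|B_{1,\pm,V_0}(i\sigma)P_l\|$ is an eigenvalue of multiplicity at least $\mu(l)\gtrsim \sigma^{d-2}$, and Lemma \ref{l:B1Pl} applied on a window of $\sim\sigma$ values of $l$ each contributing $\gtrsim\sigma$ to the exponent, yielding $\exp(c\sigma^d)$. The only things to tidy are the bookkeeping slips you yourself flag: the window $a\sigma/M<\nu_l<a\sigma/6$ is nonempty only for $M>6$ (so not $M=4$), and your ``restricted'' window $\nu_l<a\sigma/5$ is actually larger than the admissible one --- just use the full window, since $\nu_l>a\sigma/M$ already gives $\nu_l\gtrsim\sigma$ there.
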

\begin{proof}
Recall that 
$$|F_{m_0,\pm V_0}(i\sigma)|= |\det( I\pm i m_0 B_{1, \pm, V_0}(i\sigma))|$$
and that for sufficiently large $\sigma>0$ $B_1(i\sigma)$ is a 
 self-adjoint operator.   Thus for sufficiently large $\sigma$
\begin{equation}\label{eq:Fmeigenvalues}
|F_{m_0,\pm V_0}(i\sigma)|= \prod_{j=1}^{\infty} \sqrt{1+m_0^2 \lambda_j^2}
\end{equation}
where $\lambda_j$ are the nonzero eigenvalues of $B_{1,\pm, V_0}(i\sigma)$. 
 The $\lambda_j$ of course depend on 
$\sigma$, but we omit this in our notation.  

A decomposition of $B_{1,\pm, V_0}$ 
using spherical harmonics shows that $B_{1,\pm, V_0}$ has 
eigenvalue $\| B_{1,\pm, V_0} P_l\|$ with multiplicity (at least) $\mu(l)$.  
Thus using (\ref{eq:Fmeigenvalues}) and the fact that $\lambda_j^2> 0$,
we get 
$$|F_{m_0}(i\sigma)|^2 
\geq \prod_{l=1}^{\infty} (1+m_0^2\| B_{1,\pm, V_0} P_l\|^2)^{\mu(l)}$$
for sufficiently large $\sigma$.  From Lemma \ref{l:B1Pl}, we see
\begin{align*}
|F_{m_0}(i\sigma)|^2 & \geq \prod_{a\sigma/6>\nu_l> a \sigma /M}
  (1+ cm_{0}^2 \frac{e^{c\nu_l}}{\nu_l^2})^{\mu(l)}\\ & 
=\exp\left( \sum_{a\sigma/6>\nu_l> a \sigma /M}\mu(l) \log
  \left(1+ c m_0^2 \frac{e^{c\nu_l}}{\nu_l^2}\right)\right) \\
& \geq \exp\left( \sum_{a\sigma/6-(d-2)/2>l> a \sigma /M -(d-2)/2} \mu(l)\left(cl
-c(d-2)/2+\log (c /\nu_l^2)
\right) \right)
\end{align*}
Now for $l$ sufficiently large, $\mu(l) \geq l^{d-2}/(d-2)!$ so we get
$$|F_m(i\sigma)|^2 \geq \exp( c \sigma^d -C)$$
for some constants $C$  and $c>0$ for all sufficiently large $\sigma$.
\end{proof}

\vspace{2mm}
\noindent {\em Proof of Proposition \ref{p:lb}.} 
We are now ready to give the proof of Proposition \ref{p:lb}.  Since
if $W$ is a translate of $V$,
$F_{m,\pm,V}= F_{m,\pm, W}$, we may assume 
$V$ can be bounded below by $V_0=\epsilon \chi_{B_a}$, where $\chi_{B_a}$ is the 
characteristic function of the ball of radius $a>0$ and center at the origin.
Then using Lemmas \ref{l:mono} and \ref{l:spcaselb} proves the proposition 
immediately.
\qed

\section{Proof of Theorem \ref{thm:lbd}}

Let $V\in L^{\infty}_{\comp}(\Real^d),$ $V\geq 0$.  We continue to 
assume $d$ is even and to
use the function 
$$F_m(\lambda)=F_{m,\pm V}(\lambda)=\det( I\pm im (1\pm \vhalf R_0(\lambda)\vhalf )^{-1} 
\vhalf T(\lambda) \vhalf)$$
 defined first by (\ref{eq:FmV}).
Note that since 
$(I \pm \vhalf R_0(\lambda)\vhalf )^{-1}$ is a 
meromorphic function on $\Lambda$,
$F_{m,\pm V} (\lambda) $ is meromorphic on $\Lambda$.  
We shall be most interested 
in the behavior of $F_{m,\pm V}(\lambda)$ in 
$\overline{\Lambda}_0$,
since the zeros of $F_{m,\pm V}$   in $\Lambda_0$ correspond to the 
poles of $R_{\pm V}$ in $\Lambda_m$.   In the proof of Theorem \ref{thm:lbd}
we shall apply Proposition \ref{p:complex} to a function obtained by multiplying
$F_{m, \pm V}$ by a rational function.
Thus we begin this section by checking  properties of $F_{m,\pm V}$.

\begin{lemma} \label{l:gprop1}
The function $F_{m,\pm V}(\lambda)$ has only finitely many poles 
 in $\{ \lambda \in \Lambda: 0\leq \arg \Lambda \leq \pi\}$ and only finitely 
many zeros with argument $0$ or $\pi$.
\end{lemma}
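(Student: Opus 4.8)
The plan is to trace back through the definition \eqref{eq:FmV} of $F_{m,\pm V}$ and use the meromorphy of the relevant operator-valued functions together with standard facts about the resolvent $R_0(\lambda)$ on $\Lambda$. First I would recall that $\vhalf R_0(\lambda)\vhalf$ is a trace-class–valued meromorphic function on $\Lambda$ (indeed analytic on $\Lambda_0$, the physical region, apart from finitely many exceptional points, and meromorphic globally), and that $\vhalf T(\lambda)\vhalf$ is entire (in $\lambda$, on $\Lambda$) and trace class, being an explicit integral operator of finite rank behavior in $\lambda^{d-2}$ times a fixed smoothing operator. Consequently $(I\pm\vhalf R_0(\lambda)\vhalf)^{-1}$ is meromorphic on $\Lambda$, its poles confined to the discrete zero set of the analytic Fredholm family $I\pm\vhalf R_0(\lambda)\vhalf$, and hence $F_{m,\pm V}(\lambda)=\det(I+im(I\pm\vhalf R_0(\lambda)\vhalf)^{-1}\vhalf T(\lambda)\vhalf)$ is meromorphic on $\Lambda$ with poles only among the poles of $(I\pm\vhalf R_0(\lambda)\vhalf)^{-1}$.

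Next I would localize to the closed strip $S\defeq\{\lambda\in\Lambda:0\le\arg\lambda\le\pi\}$. On $\Lambda_0=\{0<\arg\lambda<\pi\}$ the operator $I+VR_0(\lambda)$ (equivalently $I+\vhalf R_0(\lambda)\vhalf$) is invertible except at the finitely many $\lambda$ where $-\Delta+V$ has an eigenvalue $\lambda^2$, as recalled in Section \ref{s:defF}; this handles the ``$+$'' sign directly, and for the ``$-$'' sign one notes $-V$ is also a compactly supported bounded potential so the same finiteness statement applies. The remaining issue is the boundary $\arg\lambda\in\{0,\pi\}$ (including $\lambda=1$, i.e.\ $|\lambda|=1$). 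Here I would argue that on the half-line $\arg\lambda=0$ the operator $I\pm\vhalf R_0(\lambda)\vhalf$ is analytic in $\lambda$ in a neighborhood of that ray (the free resolvent kernel is jointly analytic there), and the set of $\lambda$ on the ray where it fails to be invertible is discrete; a compactness argument on $\{|\lambda|\le r\}$ then shows this set is finite on any bounded piece of the ray. To get finiteness on the whole ray one invokes the large-$\sigma$-type bound: for $|\lambda|$ large (on $\arg\lambda=0$ one has $\|R_0(\lambda)\|$ controlled, or more precisely $\|\vhalf R_0(\lambda)\vhalf\|<1$ once $|\lambda|$ exceeds a constant depending on $\|V\|_\infty$ and the support), so $I\pm\vhalf R_0(\lambda)\vhalf$ is invertible there and there are no poles of $F_{m,\pm V}$ and no zeros either from the trivial factor. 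The same reasoning applies verbatim to $\arg\lambda=\pi$ by the relation $R_0(e^{i\pi}\lambda)-R_0(\lambda)=i T(\lambda)$, which is analytic, so invertibility on $\{\arg\lambda=\pi,\ |\lambda|>\text{const}\}$ follows as well.

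For the zeros on the boundary rays: on any ray $\arg\lambda=0$ or $\pi$, $F_{m,\pm V}$ restricts to a function analytic except at its finitely many poles, and a nontrivial analytic function has only discrete zeros; combined with the large-$|\lambda|$ invertibility (which forces $F_{m,\pm V}$ to be close to $1$, hence zero-free, for $|\lambda|$ large on the ray) and compactness on bounded pieces, one concludes finitely many zeros with argument $0$ or $\pi$. The one point requiring a touch of care is verifying $F_{m,\pm V}$ is not identically zero on a ray — but this follows since $F_{m,\pm V}(i\sigma)\to$ a nonzero value (in fact $F_{m,\pm V}(i\sigma)\ge c_0\exp(c_0\sigma^d)$ by Proposition \ref{p:lb}, though one only needs non-vanishing somewhere, e.g.\ $F_{m,\pm V}\to 1$ as $|\lambda|\to\infty$ in $\Lambda_0$), so $F_{m,\pm V}\not\equiv 0$ and by analytic continuation it is not identically zero on either boundary ray. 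The main obstacle is the bookkeeping at $|\lambda|=1$ and along the boundary rays, i.e.\ ruling out an accumulation of poles or zeros there; this is resolved by the uniform large-$|\lambda|$ bound on $\|\vhalf R_0(\lambda)\vhalf\|$ together with analyticity of the Fredholm family near the closed rays and the discreteness of zero sets of analytic Fredholm-determinant functions.
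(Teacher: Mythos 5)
Your large-$|\lambda|$ argument is essentially the paper's: the bound $\|\vhalf R_0(\lambda)\vhalf\|\leq C/|\lambda|$ on $\{0\leq \arg\lambda\leq\pi\}$ makes $I\pm\vhalf R_0(\lambda)\vhalf$ invertible with inverse of norm at most $2$, so $F_{m,\pm V}$ has no poles there once $|\lambda|\geq r_0$, and the companion bound $\|\vhalf T(\lambda)\vhalf\|\leq C/|\lambda|$ on the boundary rays makes the determinant nonvanishing there for large $|\lambda|$. Where you diverge is the bounded-$|\lambda|$ regime: you invoke discreteness of the non-invertibility set of the analytic Fredholm family (and of the zero set of the restricted analytic function) together with compactness, whereas the paper quotes Vodev's quantitative upper bound on the resonance counting function, using that a pole of $F_{m,\pm V}$ in $\overline{\Lambda_0}$ forces a pole of $R_{\pm V}$ there, and a zero on $\partial\overline{\Lambda_0}$ forces a pole of $R_{\pm V}$ at $e^{im\pi}\lambda$.

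This difference matters because your compactness step has a gap at the puncture $\lambda=0$. The set $\{\lambda\in\Lambda:\ 0\leq\arg\lambda\leq\pi,\ |\lambda|\leq r\}$ is not compact in $\Lambda$ (the origin is not a point of $\Lambda$), so discreteness of the pole and zero sets in $\Lambda$ does not exclude a sequence of poles, or of boundary zeros, with $|\lambda_j|\to 0$; analytic Fredholm theory alone cannot close this, and in even dimensions the behavior of $R_0(\lambda)$ as $\lambda\to 0$ is genuinely delicate (logarithmic). The paper's appeal to Vodev's bound, which controls the number of all resonances of norm at most $r$ within a bounded range of arguments, closes exactly this case; you should either cite that bound or restrict the claim to $|\lambda|\geq 1$ (which is all that the proof of Theorem \ref{thm:lbd} actually uses). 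Separately, your parenthetical claim that $F_{m,\pm V}\to 1$ as $|\lambda|\to\infty$ in $\Lambda_0$ is false: $\vhalf T(i\sigma)\vhalf$ grows exponentially in $\sigma$, and indeed Proposition \ref{p:lb} gives $|F_{m,\pm V}(i\sigma)|\geq c_0e^{c_0\sigma^d}$. This is harmless for your purpose, since Proposition \ref{p:lb} (or the nonvanishing for large $|\lambda|$ on the boundary rays) already shows $F_{m,\pm V}\not\equiv 0$.
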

\begin{proof}
We recall first  the well-known estimate 
\begin{equation}\label{eq:phyregbd}
 \| \vhalf  R_0(\lambda) \vhalf \|\leq C/|\lambda| \; \text{for}\; 
\lambda \in \Lambda, \; 0\leq \arg \lambda \leq \pi
\end{equation}  (e.g. \cite{agmon,vodeveven,vodev2}).  Thus 
for $|\lambda|\geq 2/C$, $I \pm \vhalf R_0(\lambda)\vhalf $ is 
invertible, with norm of the inverse bounded by $2$. 
Since the function $F_{m, \pm V}$ cannot have a pole at $\lambda_0$ unless
$(I\pm \vhalf R_0(\lambda)\vhalf )^{-1}$ has a pole at $\lambda_0$, we see 
$F_{m,\pm V}(\lambda)$ has 
no poles in the region $\{ \lambda \in \overline{\Lambda}_0,\;
 |\lambda|\geq r_0\}$
for some  constant $r_0$ depending on $V$.

 Moreover, from (\ref{eq:phyregbd}) $\|\vhalf T(\lambda) \vhalf  \|
\leq C/|\lambda|$ for
$\lambda \in \partial \overline{\Lambda}_0.$  Thus, there is an $r_0\geq 0$
so that $F_{m,\pm V}(\lambda)$ has no zeros
in $\{ \lambda \in \partial \overline{\Lambda}_0,\; |\lambda |\geq r_0\}$.

The bounds of Vodev \cite{vodeveven, vodev2}
 ensure that there are only finitely many poles 
of $R_{\pm V}(\lambda)$ in $\{ \lambda
\in \overline{\Lambda_{m}} : 
|\lambda|\leq r \}$ for any $r$.  Since $F_{m,\pm V}$ has a pole at $
\lambda \in 
\overline{\Lambda_0}$ only if $R_{\pm V}$ has a pole 
there, and has a zero at $z\in 
\partial\overline{\Lambda_0}$ only if $R_{\pm V}$ has a pole
at $e^{im \pi} \lambda $, this finishes the proof of the claim.
\end{proof}

\begin{lemma} \label{l:bdonreal} Let $t\in  \Lambda $ have $\arg t=0 $ or
$\arg t=\pi$.  Then
there are constants $C, \; r_0 >0$ depending on $V$ and $m$ so that 
$$\left| \frac{\frac{d}{dt} F_{m,\pm V}(t)}{F_{m, \pm V}(t)} 
\right| \leq C|t|^{d-2}\; \text{for}\; |t|\geq r_0.$$
\end{lemma}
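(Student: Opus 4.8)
The plan is to bound the logarithmic derivative of $F_{m,\pm V}(t)=\det(I\pm im B_{1,\pm,V}(t))$ on the real axis by differentiating the determinant formula and controlling each of the resulting trace-class factors. Recall from Section~\ref{s:defF} that $F_{m,\pm V}(\lambda)=\det(I+im(I\pm\vhalf R_0(\lambda)\vhalf)^{-1}\vhalf T(\lambda)\vhalf)$; write $K(\lambda)=im(I\pm\vhalf R_0(\lambda)\vhalf)^{-1}\vhalf T(\lambda)\vhalf$, which is trace class and depends analytically on $\lambda$ in a neighborhood of the rays $\arg\lambda=0$ and $\arg\lambda=\pi$ away from the finitely many poles and zeros identified in Lemma~\ref{l:gprop1}. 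By the standard formula for the derivative of an (infinite) determinant,
\begin{equation*}
\frac{\frac{d}{dt}F_{m,\pm V}(t)}{F_{m,\pm V}(t)}=\tr\!\left((I+K(t))^{-1}\frac{d}{dt}K(t)\right),
\end{equation*}
valid precisely where $I+K(t)$ is invertible, i.e.\ where $F_{m,\pm V}(t)\neq0$. So for $|t|\geq r_0$ with $r_0$ from Lemma~\ref{l:gprop1}, the quantity is well defined and it remains to estimate the trace-class norm of the operator inside $\tr$.

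The key estimates, all on the rays $\arg t=0,\pi$ with $|t|$ large, are: (i) $\|(I\pm\vhalf R_0(t)\vhalf)^{-1}\|\leq 2$ from \eqref{eq:phyregbd}; (ii) growth of $\vhalf T(t)\vhalf$ and its derivative — from the explicit kernel \eqref{eq:Gnu}/\eqref{eq:gnudiff}, with the Bessel asymptotics along these rays $T(t)$ behaves like $t^{d-2}$ times an oscillatory integral operator with smooth compactly supported kernel, so its trace-class norm (being the trace-class norm of a fixed smooth finite-rank-in-$\omega$ operator, compactly supported in $x,y$) grows like $C|t|^{d-2}$, and $\frac{d}{dt}(\vhalf T(t)\vhalf)$ gains one power, growing like $C|t|^{d-1}$; (iii) the derivative $\frac{d}{dt}(I\pm\vhalf R_0(t)\vhalf)^{-1}=\mp(I\pm\vhalf R_0(t)\vhalf)^{-1}(\vhalf R_0'(t)\vhalf)(I\pm\vhalf R_0(t)\vhalf)^{-1}$, where $\vhalf R_0'(t)\vhalf$ on these rays is $O(1)$ in trace-class norm (again it has a smooth compactly supported kernel, and the $1/|\lambda|$ decay in \eqref{eq:phyregbd} improves by differentiation only modestly but in any case stays bounded). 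Combining via the Leibniz rule, $\frac{d}{dt}K(t)$ has trace-class norm $O(|t|^{d-1})$, and dividing by the eigenvalue-free lower bound hidden in invertibility of $I+K(t)$ — more carefully, estimating $\tr((I+K)^{-1}K')$ by $\|(I+K)^{-1}\|\,\|K'\|_{\mathrm{tr}}$ — one must be slightly careful since $\|(I+K(t))^{-1}\|$ is not obviously bounded.

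The main obstacle, and the place where the stated exponent $d-2$ rather than $d-1$ comes from, is handling the factor $(I+K(t))^{-1}$: a crude bound $\|K'\|_{\mathrm{tr}}\cdot\|(I+K)^{-1}\|$ would give $|t|^{d-1}$ at best and possibly worse if $\|(I+K)^{-1}\|$ blows up near the zeros of $F$. The resolution I would pursue is to write $\vhalf T(t)\vhalf = \vhalf(R_0(e^{im\pi}t)-R_0(t))\vhalf/(im)\cdot im$ and observe that the logarithmic derivative of $F_{m,\pm V}$ is more naturally expressed through the resolvents $R_{\pm V}$ themselves. Indeed, since the zeros of $F_{m,\pm V}$ on these rays correspond (with multiplicity) to poles of $R_{\pm V}(e^{im\pi}t)$, and $F_{m,\pm V}(t)$ is essentially $\det(I\pm V R_0(e^{im\pi}t))/\det(I\pm VR_0(t))$ up to non-vanishing factors, one has
\begin{equation*}
\frac{\frac{d}{dt}F_{m,\pm V}(t)}{F_{m,\pm V}(t)}=\tr\!\left(V R_{\pm V}(e^{im\pi}t)\frac{d}{dt}R_0(e^{im\pi}t)^{-1}\cdots\right)-(\text{same with }R_0(t)),
\end{equation*}
and using the resolvent identity plus the bound $\|V R_0(\lambda)\|, \|VR_{\pm V}(\lambda)\chi\|\leq C$ on $\partial\overline{\Lambda}_0$ together with $\|\frac{\partial}{\partial\lambda}R_0(\lambda)\chi\|_{\mathrm{tr}}=O(|\lambda|^{d-2})$ (the extra power $|\lambda|^{d-2}$ relative to the one-dimensional case being exactly the trace-class norm of $\partial_\lambda$ of the free resolvent kernel restricted to a compact set in $\Real^d$, even dimension), one obtains the $|t|^{d-2}$ bound uniformly for $|t|\geq r_0$. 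I would organize the proof as: first record the determinant differentiation formula and the region where it is valid (invoking Lemma~\ref{l:gprop1}); second, establish the trace-class bound $O(|t|^{d-2})$ on the relevant derivative-of-resolvent operator using the explicit kernels from Section~\ref{s:defF} and Bessel function asymptotics along the rays; third, combine with the uniform operator bounds from \eqref{eq:phyregbd} to conclude. The delicate point to get right is that one must differentiate an object whose natural growth is $|t|^{d-1}$ but arrange the algebra so that a power is absorbed, which is why the statement is $|t|^{d-2}$ and not $|t|^{d-1}$.
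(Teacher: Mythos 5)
Your starting point coincides with the paper's: differentiate the determinant to get $F'/F=\tr\bigl((I+K)^{-1}K'\bigr)$ with $K(t)=\pm im(I\pm\vhalf R_0(t)\vhalf)^{-1}\vhalf T(t)\vhalf$, then estimate the trace norm of $K'$. But the quantitative core of your argument is wrong, and the detour you propose to repair it does not close the gap. First, your step (ii) asserts that $\tfrac{d}{dt}(\vhalf T(t)\vhalf)$ "gains one power" and has trace norm $O(|t|^{d-1})$. This is false on the rays $\arg t\in\{0,\pi\}$, and the fact that it is false is precisely the content of the lemma. Writing $\vhalf T(t)\vhalf=\alpha_d t^{d-2}\,{\mathbb E}^t_{\vhalf}(e^{i\pi}t)\,{\mathbb E}_{\vhalf}(t)$ as in (\ref{eq:Tascomp}), the kernel of $\tfrac{d}{dt}{\mathbb E}_{\vhalf}(t)$ is $i(\omega\cdot x)e^{it\omega\cdot x}\vhalf(x)$; since $|e^{it\omega\cdot x}|=1$ for real $t$ and $|\omega\cdot x|$ is bounded on $\supp V$, its Hilbert--Schmidt norm is bounded \emph{uniformly in $t$}. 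Hence $\|\tfrac{d}{dt}(\vhalf T(t)\vhalf)\|_1=O(|t|^{d-2})$: differentiation costs nothing, and the exponent $d-2$ comes directly from the scalar prefactor $t^{d-2}$, not from any cancellation against $(I+K)^{-1}$. This Hilbert--Schmidt factorization (Froese's argument) is the key idea of the paper's proof and is the ingredient missing from yours.

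Second, your concern that $\|(I+K(t))^{-1}\|$ might blow up is resolved trivially and does not require the rewriting you sketch: on the rays, (\ref{eq:phyregbd}) gives $\|\vhalf T(t)\vhalf\|\leq C/|t|$, so $\|K(t)\|\leq 1/2$ for $|t|\geq r_0$ and $\|(I+K(t))^{-1}\|\leq 2$. No power needs to be "absorbed." Finally, the alternative route through $R_{\pm V}$ rests on the unjustified claim $\|\partial_\lambda R_0(\lambda)\chi\|_{\mathrm{tr}}=O(|\lambda|^{d-2})$; for $d\geq 4$ the operator $\chi\,\partial_\lambda R_0(\lambda)\,\chi$ need not be trace class at all (it is the \emph{difference} of resolvents across sheets, i.e.\ $T(\lambda)$, that is trace class after cutoff, exactly because of the factorized structure above). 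As written, your argument would at best yield $|t|^{d-1}$, and the proposed fix is not carried out; the gap is genuine.
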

\begin{proof}
Note that 
\begin{equation}
\label{eq:detderiv}
\frac{\frac{d}{dt}F_{m,\pm V}(t)}{F_{m, \pm V}(t)} = 
\tr \left( \pm i m (I\pm im W(t))^{-1}\frac{d}{dt}W(t) \right)
\end{equation}
where 
$$W(t)=W_{\pm V}(t)= (I\pm V^{1/2}R_0(t) V^{1/2})^{-1} V^{1/2}T(t) V^{1/2}.$$
Using (\ref{eq:phyregbd}) we see that that there is an $r_0 >0$  so that
\begin{equation}\label{eq:invbd}
\| (I \pm  \vhalf R_0(t)\vhalf)^{-1} \|\leq 2 \; \text{ for $|t|>r_0$}.
\end{equation}
For the values of $t$ in question (on the boundary of
the physical region), for any $\chi \in C_c^{\infty}(\Real^d)$
and any $j\in \Natural_0$
 there are constants $C_j$ depending on $\chi$ so that 
\begin{equation}
\left\| \frac{d^j}{dt^j} \chi R_0(t) \chi \right\| 
\leq C_j |t|^{-1-j}, \; |t|\geq 1,  
\end{equation} 
see e.g. \cite[Section 8]{j-k} or \cite[Section 16]{k-k}.
This implies that for $|t|$ sufficiently large with $\arg t=0,\; \pi$,
$\| \frac{d^j}{dt^j}W(t)\|\leq C_j$, $j=0,\; 1$, for some new 
constant $C_j$ depending on $V$.

Now we use an argument as in \cite[Lemma 3.3]{froeseodd} to bound
$\| W(t)\|_1$ and
$\| \frac{d}{dt}W(t)\|_1$, where $\| \cdot \|_1$ is the trace class norm.
We write, for $\chi \in L^{\infty}_{\comp}(\Real^d)$
\begin{equation}\label{eq:Tascomp}
\chi T(\lambda) \chi = \alpha_d \lambda^{d-2} 
{\mathbb E}_\chi^t(e^{i\pi}\lambda)  {\mathbb E}_\chi(\lambda) 
\end{equation}
where
$${\mathbb E}_\chi( \lambda):L^2(\Real^d)\rightarrow L^2(\Sphere^{d-1}),\;
 {\mathbb E}_{\chi}(\lambda)(\theta,x) =\chi(x) e^{i\lambda x \cdot \theta},\; x\in \Real^d,\; \theta 
\in \Sphere^{d-1}.$$
Then, just as in \cite{froeseodd}, we note that with $\| \cdot \|_2$ 
denoting the Hilbert-Schmidt norm,
$$\| {\mathbb E}_\chi( t ) \|^2_2=\int_{\Sphere^{d-1}} \int_{\Real^d}|e^{it\omega\cdot x}  \chi(x)|^2dxd\omega \leq C_\chi, \; \text{for}\; (\arg t)/\pi \in \Integers$$
and 
$$\left\|\frac{d}{dt} {\mathbb E}_\chi( t ) \right\|^2_2
= \int_{\Sphere^{d-1}} \int_{\Real^d}\left|i
(\omega\cdot x)e^{it\omega\cdot x}  \chi(x)\right|^2dxd\omega \leq C_\chi, \; \text{for}\; (\arg t)/\pi \in \Integers.$$
The same estimate holds for $\| {\mathbb E}_\chi^t(e^{i\pi}t)\|_2^2$
and $\| \frac{d}{dt}{\mathbb E}_\chi^t(e^{i\pi}t)\|_2^2$.
Putting this all together and using that
$\| AB\|_1\leq \|A\|_2\| B\|_2$, we see that 
$$\left \| \frac{d^j}{dt^j} W(t) \right\|_1 \leq C,\; \text{for}\; j=0,\; 1.$$
Thus 
\begin{align*}
\left| \frac{\frac{d}{dt}F_{m,\pm V}(t)}{F_{m, \pm V}(t)}\right| &  = 
\left|
\tr \left( \pm i m (I\pm im W(t))^{-1}\frac{d}{dt}W(t) \right)\right|\\
& \leq \left\|  m (I\pm im W(t))^{-1}\frac{d}{dt}W(t)\right\|_1
\leq C|t|^{d-2}
\end{align*}
when $|t|$ is sufficiently large.
%
\end{proof}

The next lemma gives a bound on $F_{m,\pm V}(z)$, $z\in \Lambda_0$, which
is of a type which has been repeatedly used in proofs of upper bounds
on the number of resonances.  Closely related results can be found in
\cite{melrosepb,zworskiodd,froeseodd}, among others.
  We include the proof for the convenience of the reader,
although it is essentially a minor modification of arguments used in, for
example, \cite{zworskiodd,froeseodd} to, in the odd-dimensional case,
 bound something like the determinant of 
the scattering matrix in the physical half-plane.
\begin{lemma}
There are constants $C$, $r_0>0$  depending on $V$ and $m$ so that
$$|F_{m,\pm, V}(\lambda)|\leq C \exp(C|\lambda|^d),\; \text{for all } \lambda
 \in \overline{\Lambda_0}, \; |\lambda|>r_0.$$
\end{lemma}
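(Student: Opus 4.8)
The plan is to estimate $|F_{m,\pm V}(\lambda)|=|\det(I\pm imW(\lambda))|$, with $W(\lambda)=(I\pm\vhalf R_0(\lambda)\vhalf)^{-1}\vhalf T(\lambda)\vhalf$ as in the proof of Lemma~\ref{l:bdonreal} (so $W(\lambda)$ is trace class), by means of the inequality $|\det(I+A)|\le\prod_j(1+s_j(A))$ for the singular values $s_1(A)\ge s_2(A)\ge\cdots$. By (\ref{eq:phyregbd}) there is an $r_0$ (depending on $V$) so that $(I\pm\vhalf R_0(\lambda)\vhalf)^{-1}$ has norm at most $2$ on $\{\lambda\in\overline{\Lambda_0}:|\lambda|\ge r_0\}$ — hence, as in Lemma~\ref{l:gprop1}, $F_{m,\pm V}$ has no poles there — so $s_j(W(\lambda))\le 2s_j(\vhalf T(\lambda)\vhalf)$. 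As in (\ref{eq:Tascomp}) I would factor $\vhalf T(\lambda)\vhalf=\alpha_d\lambda^{d-2}\,{\mathbb E}_{\vhalf}^t(e^{i\pi}\lambda)\,{\mathbb E}_{\vhalf}(\lambda)$, where ${\mathbb E}_{\vhalf}(\mu)\colon L^2(\Real^d)\to L^2(\Sphere^{d-1})$ has kernel $\vhalf(x)e^{i\mu x\cdot\theta}$. Since $\supp V\subseteq\overline{B}(R)$ and $\Im\lambda\ge0$ on $\overline{\Lambda_0}$, one has $|e^{i\lambda x\cdot\theta}|,|e^{i(e^{i\pi}\lambda)x\cdot\theta}|\le e^{R|\lambda|}$ on $\supp V$, so $\|{\mathbb E}_{\vhalf}(\lambda)\|_2,\|{\mathbb E}_{\vhalf}^t(e^{i\pi}\lambda)\|_2\le(|\Sphere^{d-1}|\,\|V\|_{L^1})^{1/2}e^{R|\lambda|}$ and therefore $\|W(\lambda)\|\le C|\lambda|^{d-2}e^{2R|\lambda|}\le Ce^{C|\lambda|}$ there. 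This crude bound is far too weak to feed into $|\det(I+A)|\le e^{\|A\|_1}$ — that would yield only the useless double exponential $e^{e^{C|\lambda|}}$ — and the real content is the rapid decay of $s_j(W(\lambda))$.

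To extract that decay I would approximate ${\mathbb E}_{\vhalf}$ by finite-rank operators. Taylor expanding, ${\mathbb E}_{\vhalf}(\mu)=\sum_{n\ge0}\frac{\mu^n}{n!}{\mathbb E}_n$, where ${\mathbb E}_n$ has kernel $\vhalf(x)(ix\cdot\theta)^n$, so $\|{\mathbb E}_n\|\le\|{\mathbb E}_n\|_2\le(|\Sphere^{d-1}|\,\|V\|_{L^1})^{1/2}R^n$. For any $N$, the partial sum $\sum_{n<N}\frac{\mu^n}{n!}{\mathbb E}_n$ has range inside the span of the restrictions to $\Sphere^{d-1}$ of the monomials $\theta^\alpha$ with $|\alpha|\le N-1$, i.e.\ inside the space of spherical harmonics of degree $\le N-1$, whose dimension is at most $C_dN^{d-1}$; and the tail $\sum_{n\ge N}\frac{\mu^n}{n!}{\mathbb E}_n$ has norm at most $C\sum_{n\ge N}(R|\mu|)^n/n!$, which for $N\ge 2eR|\mu|$ is dominated by a geometric series of ratio $\le\tfrac12$ and hence is at most $2C(eR|\mu|/N)^N$. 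Applying this with the same $N$ to both factors of $\vhalf T(\lambda)\vhalf$ and absorbing the bounded factor $(I\pm\vhalf R_0(\lambda)\vhalf)^{-1}$, I would fix a constant $c$ (depending only on $R$) large enough that for each integer $k\ge1$ there is a decomposition $W(\lambda)=P_k(\lambda)+Q_k(\lambda)$ with $\rank P_k(\lambda)\le C_d(\lceil c|\lambda|\rceil+k)^{d-1}$ and $\|Q_k(\lambda)\|\le 2^{-k}$, valid for $\lambda\in\overline{\Lambda_0}$, $|\lambda|\ge r_0$. (One takes $N=\lceil c|\lambda|\rceil+k$; $c\ge2eR$ makes the geometric estimate work, and $c$ is chosen large enough that $e^{2R|\lambda|}(eR/c)^{c|\lambda|}$ together with the polynomial prefactors is $\le1$ for $|\lambda|\ge r_0$, while $(eR/c)^k\le2^{-k}$.)

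From the elementary $s$-number inequality $s_i(A+B)\le s_{i-\rank A}(B)$ (valid for $i>\rank A$) it then follows that $s_i(W(\lambda))\le 2^{-k}$ whenever $i>C_d(\lceil c|\lambda|\rceil+k)^{d-1}$. Put $j_0=C_d(\lceil c|\lambda|\rceil+1)^{d-1}\le C|\lambda|^{d-1}$, and note that the number of indices $i$ with $C_d(\lceil c|\lambda|\rceil+k)^{d-1}<i\le C_d(\lceil c|\lambda|\rceil+k+1)^{d-1}$ is at most $C(|\lambda|+k)^{d-2}$. Then
$$\log|F_{m,\pm V}(\lambda)|\le\sum_{i\ge1}\log\bigl(1+|m|s_i(W(\lambda))\bigr)\le j_0\log\bigl(1+|m|\|W(\lambda)\|\bigr)+|m|\sum_{k\ge1}C(|\lambda|+k)^{d-2}2^{-k}.$$
The first term is $\le C|\lambda|^{d-1}\bigl(C|\lambda|+\log(1+|m|)\bigr)\le C|\lambda|^d$ for $|\lambda|\ge r_0$, and the series is $\le C|\lambda|^{d-2}+C\le C|\lambda|^d$. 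Hence $|F_{m,\pm V}(\lambda)|\le e^{C|\lambda|^d}$ on $\{\lambda\in\overline{\Lambda_0}:|\lambda|\ge r_0\}$, with $C$ and $r_0$ depending on $V$ (through $R$, $\|V\|_{L^1}$, $\|V\|_\infty$, and the constant in (\ref{eq:phyregbd})) and $C$ also on $m$, which is the assertion.

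The hard part will be the calibration in the two middle steps: $N$ must grow exactly linearly in $|\lambda|$ — large enough that $Q_k$ is summably small, yet small enough that $\rank P_k=O(|\lambda|^{d-1})$ — and $\log|\det(I\pm imW)|$ must be split so that the $O(|\lambda|^{d-1})$ ``large'' singular values each contribute only $\log\|W(\lambda)\|=O(|\lambda|)$, rather than being lumped into a trace-norm estimate. It is precisely this matching of scales that pins the exponent in $e^{C|\lambda|^d}$ to $d$ and no larger, and I expect the bookkeeping of the tail sum over the blocks of singular values to require the most care.
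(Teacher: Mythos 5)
Your argument is correct, and it reaches the same bound by a visibly different route from the paper. The paper uses $\det(I+AB)=\det(I+BA)$ to transfer the problem to an operator $K(\lambda)$ on $L^2(\Sphere^{d-1})$, estimates $\|\Delta^j_{\Sphere^{d-1},\theta}K(\lambda)(\theta,\omega)\|$ for all $j$, and then invokes \cite[Proposition 2]{zworskiodd} as a black box to get $|\det(I+K)|\leq Ce^{C|\lambda|^d}$; the singular-value decay that drives that proposition is extracted there from the kernel's smoothness via the spectral decomposition of $\Delta_{\Sphere^{d-1}}$. You instead stay with $W(\lambda)$ on $L^2(\Real^d)$ and manufacture the singular-value decay by hand: Taylor-expanding $e^{i\lambda x\cdot\theta}$ so that the degree-$<N$ partial sum of ${\mathbb E}_{\vhalf}$ has range in polynomials of degree $\leq N-1$ on the sphere (rank $O(N^{d-1})$), calibrating $N\sim c|\lambda|+k$ so the tail is $\leq 2^{-k}$, and then running the Ky Fan inequality and the block-by-block estimate of $\sum\log(1+|m|s_i)$ explicitly. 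In effect you reprove the relevant case of Zworski's proposition rather than citing it, which makes the argument self-contained and makes transparent exactly where the exponent $d$ comes from ($O(|\lambda|^{d-1})$ large singular values, each contributing $O(|\lambda|)$); the price is the extra bookkeeping you yourself flag, which the paper outsources. Two small points to tidy: the cross terms $(P'')^tQ$ and $(Q'')^tP$ in the product ${\mathbb E}^t(e^{i\pi}\lambda){\mathbb E}(\lambda)$ are finite rank and should be absorbed into $P_k(\lambda)$ (otherwise you must beat the $e^{R|\lambda|}$ Hilbert--Schmidt bound on $P''$, which your choice of $c$ does handle but which is cleaner to avoid); and you should state explicitly that $W(\lambda)$ is trace class on all of $\overline{\Lambda_0}\cap\{|\lambda|\geq r_0\}$ (immediate from the Hilbert--Schmidt bounds on ${\mathbb E}_{\vhalf}$) so that $|\det(I+imW)|\leq\prod(1+|m|s_i(W))$ applies.
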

\begin{proof}\label{l:upperbd}
Using (\ref{eq:Tascomp}) and
that $\det(I+AB)=\det(I+BA)$ when both $AB$ and $BA$ are trace
class, 
$$F_{m,\pm,V}(\lambda)= \det( I+ K(\lambda))$$
where $K(\lambda):L^2(\Sphere^d)\rightarrow L^2(\Sphere^d)$ 
is given by 
$$K(\lambda)= \pm i m \alpha_d \lambda^{d-2} {\mathbb E}_{\vhalf}(\lambda)
 (I\pm \vhalf R_0(\lambda)\vhalf)^{-1}
{\mathbb E}_{\vhalf}^t(e^{i\pi}\lambda).  $$
Choose $r_0\geq 0$ so that 
$$\| (I\pm \vhalf R_0(\lambda)\vhalf)^{-1}\| \leq 2 \; \text{for }\; \lambda\in \Lambda_0,\; 
|\lambda|\geq r_0.$$  By slight abuse of notation, we denote the Schwartz kernel of $K$ by $K$
as well.
Then
 there is some constant $C$ so that for each $j\in \Natural$,
$$|\Delta_{\Sphere^{d-1},\theta}^j K(\lambda)(\theta,\omega) |\leq C^{2j+1}(|\lambda|^{2j}+
(2j)!)e^{C|\lambda|}\; \text{for}\; \lambda\in \overline{\Lambda_0},\; 
|\lambda|\geq r_0$$
since $$|\Delta_{\Sphere^{d-1}}^k e^{i \lambda x\cdot \theta} \vhalf (x) | 
\leq C^k (|\lambda|^{2k} + (2k)!)  )e^{C|\lambda|}$$ 
and 
$| (I\pm \vhalf R_0(\lambda)\vhalf)^{-1}
{\mathbb E}_{\vhalf}(e^{i\pi}\lambda)^t | \leq C \exp( C |\lambda|), $ 
when $|\lambda|\geq r_0$.
Thus by \cite[Proposition 2]{zworskiodd}, 
$$|\det (I+K(\lambda))|\leq C' e^{C'|\lambda|^{d}},\; \lambda\in 
\overline{\Lambda_0},\; |\lambda|>r_0.$$
\end{proof}

We are now ready to give the proof of Theorem \ref{thm:lbd}.
\begin{proof}
The proof is by contradiction.  So suppose for some fixed potential
$V$ satisfying the hypotheses of the theorem and for some value of 
$m\in \Integers \setminus \{ 0\}$ and for  choice of sign (positive
or negative)
\begin{equation}\label{eq:contassumpt}
\lim \sup_{r \rightarrow \infty} \frac{\log n_{m, \pm V}(r)}{\log r}<d.
\end{equation}
We work with this fixed value of $m$ and fixed choice of sign for the 
remainder of this proof.  For this choice of $m$ and sign  consider 
the function $$F_{m,\pm V}(\lambda)= \det( I\pm im (1\pm \vhalf R_0(\lambda)\vhalf )^{-1} 
\vhalf T(\lambda) \vhalf).$$
 
We denote by $\tilde{n}(r) $ the number of
zeros, counted with multiplicity, of $F_{m,\pm V}$ in $\Lambda_0$
of norm at most $r$. 
The assumption (\ref{eq:contassumpt}) means that there is a constant
$d'<d$  so that 
$n_{m,\pm V}(r) =O(r^{d'})$ for $r\rightarrow \infty$.  
Since with at most 
finitely many exceptions the zeros of $F_{m,\pm V}$ in
$\Lambda_0$ correspond, with multiplicity, to the poles of $R_{\pm V}$
in $\Lambda_m$ (see 
Section \ref{s:defF}), 
$\tilde{n}(r)=n_{m,\pm V}(r)+O(1)\leq C(1+r^{d'})$ for some 
constant $C$.

We identify $\Lambda_0$ with the upper half plane
and use the variable $z$ there.  Thus we may think of 
$F_{m,\pm V}$ as   function meromorphic in a neighborhood of 
$${\Omega}=
\{ z \in \Complex: |z|\geq 1,\; 0\leq \arg z \leq \pi\}.$$ 
Let $a_1,...,a_{m_p}$ be the poles of $F_{m,\pm V}$ in ${\Omega}$,
and let $b_1,...,b_{m_z}$ be the zeros of $ g$ in $\partial {\Omega}$, 
in both cases repeated according to multiplicity.  Recall
we know there are only finitely many by Lemma \ref{l:gprop1}.  Now
set 
$$h(z)\defeq  \frac{\prod_{j=1}^{m_p}(z-a_j) }{\prod _{j=1}^{m_z}(z-b_j)}
F_{m,\pm V}(z).$$
If there are no poles or no real zeros, the corresponding product is omitted.
By applying Lemmas \ref{l:bdonreal} and \ref{l:upperbd}, we see
that $h$ satisfies the hypotheses of 
Proposition \ref{p:complex} with $\rho=\max(d',d-1+\epsilon)$ for any
 $\epsilon>0$.   Thus for some constant $C$,
$|F_{m,\pm V}(z)| \leq C\exp(C|z|^\rho)$ for $z\in {\Omega}$ and $\rho<d$.
But this contradicts Proposition \ref{p:lb}.
\end{proof}

\end{document}